\let\cedille\c
\begin{document}

%

%
\runningauthor{Feydy, S\'ejourn\'e, Vialard, Amari, Trouv\'e, Peyr\'e}
\runningtitle{Interpolating between Optimal Transport and MMD using Sinkhorn Divergences}

\twocolumn[

\vspace{-\baselineskip}
\aistatstitle{Interpolating between Optimal Transport and MMD\\ using Sinkhorn Divergences}


\vspace{-\baselineskip}
\aistatsauthor{ Jean Feydy \And Thibault S\'ejourn\'e \And  Fran\cedille{c}ois-Xavier Vialard}
\aistatsaddress{ DMA, \'Ecole Normale Sup\'erieure\\CMLA, ENS Paris-Saclay  \And DMA, \'Ecole Normale Sup\'erieure \And LIGM, UPEM } 

\vspace{-\baselineskip}
\aistatsauthor{  Shun-ichi Amari \And Alain Trouv\'e \And Gabriel Peyr\'e }
\aistatsaddress{  Brain Science Institute, RIKEN  \And CMLA, ENS Paris-Saclay \And  DMA, \'Ecole Normale Sup\'erieure  } 

]


\begin{abstract}
  	Comparing probability distributions is a fundamental problem in data sciences. 
	Simple norms and divergences such as the total variation and the relative entropy only compare densities in a point-wise manner and fail to capture the geometric nature of the problem.
	In sharp contrast, Maximum Mean Discrepancies (MMD) 
	and Optimal Transport distances (OT) 
	are two classes of distances between measures that take into account the geometry of the underlying space and metrize the convergence in law.

	This paper studies the Sinkhorn divergences, a family of geometric divergences that interpolates between MMD and OT.
	Relying on a new notion of geometric entropy, we provide theoretical guarantees for these divergences: positivity, convexity and metrization of the convergence in law.
	On the practical side, we detail a numerical scheme
	that enables the large scale application
	of these divergences for machine learning:
	on the GPU, gradients of the Sinkhorn loss
	can be computed for batches of a million samples.
	%
\end{abstract}

\section{Introduction}
\label{sec-intro}

Countless methods in machine learning and imaging sciences rely 
on comparisons between probability distributions.
With applications ranging from shape matching~\citep{vaillant2005surface,varifold}
to classification~\citep{2015-Frogner} and generative model 
training~\citep{goodfellow2014generative},
a common setting is that of \emph{measure fitting}:
given a unit-mass, positive empirical distribution $\beta \in \Mm_1^+(\Xx)$
on a feature space $\Xx$,
a loss function 
$\L : \Mm_1^+(\Xx)\times \Mm_1^+(\Xx)\rightarrow \R$
and a model distribution $\al_\theta \in\Mm_1^+(\Xx)$
parameterized by a vector $\theta$,
we strive to minimize $\theta\mapsto \L(\al_\theta,\be)$
through gradient descent.
Numerous papers focus on the construction of
suitable models $\theta\mapsto \al_\theta$.
But which loss function $\L$ should we use?
If $\Xx$ is endowed with a \emph{ground distance}
$\d:\Xx\times\Xx\rightarrow \R_+$, taking
it into account can make sense and help
descent algorithm to overcome spurious
local minima.


\paragraph{Geometric divergences for Machine Learning.}

Unfortunately, simple dissimilarities  such as the Total Variation norm  or the Kullback-Leibler relative entropy do not take into account the distance~$\d$ on the feature space~$\Xx$. As a result, they do not metrize the convergence in law (aka. the weak$^*$ topology of measures) and are unstable with respect to deformations of the distributions' supports. 
We recall that if $\Xx$ is compact,
$\al_n$ converges weak$^*$ towards $\al$ (denoted $\al_n \rightharpoonup \al$) if for all continuous test functions $f\in \Cc(\Xx)$, $\dotp{\al_n}{f} \rightarrow \dotp{\al}{f}$ where $\dotp{\al}{f} \eqdef \int_\Xx f \d\al = \EE[f(X)]$ for any random vector $X$ with law $\al$.

The two main classes of losses $\L(\al,\be)$ which avoid these shortcomings are Optimal Transport distances and Maximum Mean Discrepancies: they are continuous with respect to the convergence in law and metrize its topology.
That is, $\al_n \rightharpoonup \al \Leftrightarrow \L(\al_n,\al) \rightarrow 0$.
The main purpose of this paper is to study the theoretical properties of a new class of \emph{geometric} divergences which interpolates between these two families and thus offers an extra degree of freedom through a parameter $\epsilon$ that can be cross-validated in typical learning scenarios.

\subsection{Previous works}

\paragraph{OT distances and entropic regularization.}

A first class of geometric distances between measures is that of Optimal Transportation (OT) costs,  which are computed as solutions of a linear program~\citep{Kantorovich42} (see~\eqref{eq-primal-ot} below in the special case $\epsilon=0$). Enjoying many theoretical properties, these costs allow us to lift a ``ground metric'' on the feature space $\Xx$ towards a metric on the space $\Mm_+^1(\Xx)$ of probability distributions \citep{santambrogio2015optimal}. 
%
OT distances (sometimes referred to as Earth Mover's Distances~\citep{rubner-2000}) are progressively being adopted 
as an effective tool in a wide range of situations,
from computer graphics~\citep{bonneel2016wasserstein} to supervised
learning~\citep{2015-Frogner}, unsupervised density
fitting~\citep{bassetti2006minimum} and generative model
learning
~\citep{montavon2016wasserstein,WassersteinGAN,
salimans2018improving,genevay2018learning,SanjabiSinkhGAN}. 
However, in practice, solving the linear problem required
to compute these OT distances is a challenging issue;
many algorithms that leverage the properties of the 
underlying feature space $(\Xx,\d)$ have thus 
been designed to accelerate the
computations, see~\citep{peyre2017computational} for an overview.

Out of this collection of methods, entropic regularization has recently emerged as a computationally efficient way 
of approximating OT costs.
For $\epsilon > 0$, we define
\begin{align}\label{eq-primal-ot}  \hspace*{-.5cm}
	\OT_\epsilon(\al,\be) \eqdef
	\!\!\umin{\pi_1=\al,\pi_2=\be} \!
	\int_{\Xx^2} \C \,\d\pi 
	+ \epsilon \KL(\pi|\al\otimes\be)
\end{align}
\eq{
    \qwhereq  \KL(\pi|\al\otimes\be) \eqdef \int_{\Xx^2} \log\pa{ \frac{\d\pi}{\d\al\d\be} } \d\pi, 
}
where $\C(x,y)$ is some symmetric positive cost function (we assume here that $\C(x,x)=0$) and 
where the minimization is performed over coupling measures $\pi \in \Mm_1^+(\Xx^2)$ as $(\pi_1,\pi_2)$ denotes the two marginals of $\pi$. 
Typically, $\C(x,y)= \|x-y\|^p$ on  $\Xx\subset\R^\D$ 
and setting $\epsilon=0$ in~\eqref{eq-primal-ot} allows us to retrieve the Earth Mover ($p=1$) or the quadratic Wasserstein ($p=2$) distances.

%
The idea of adding an entropic barrier $\KL(\,\cdot\,|\al\otimes\be)$ to the original linear OT program can be traced back to Schr\"odinger's problem~\citep{leonard2013survey} and has been used for instance in social sciences~\citep{galichon2010matching}.
Crucially, as highlighted in~\citep{CuturiSinkhorn}, the smooth problem~\eqref{eq-primal-ot}
can be solved efficiently on the GPU as soon as $\epsilon > 0$~:
the celebrated Sinkhorn algorithm (detailed in Section~\ref{sec:practice})
allows us to compute efficiently a smooth, geometric loss $\OT_\epsilon$ between sampled measures.

\paragraph{MMD norms.}

Still, to define geometry-aware distances between measures, a simpler approach
is to integrate a positive definite kernel $k(x,y)$ on the feature space $\Xx$. 
On a Euclidean feature space $\Xx\subset \R^\D$,
we typically use RBF kernels such as the Gaussian kernel $k(x,y)=\exp(-\norm{x-y}^2/2\si^2)$ or the energy distance (conditionally positive)
kernel $k(x,y)=-\norm{x-y}$. The kernel loss is then defined, for $\xi=\al-\be$, as
\begin{align}
  \label{def-mmd}
  \L_k(\al,\be)\eqdef
  \tfrac{1}{2}\norm{\xi}_k^2 \eqdef \tfrac{1}{2}\int_{\Xx^2} k(x,y)\,\d\xi(x)\d\xi(y).
\end{align}
If $k$ is universal~\citep{micchelli2006universal} (i.e. if the linear space spanned by functions $k(x,\cdot)$ is dense in $\Cc(\Xx)$) we know that $\norm{\cdot}_k$ metrizes the convergence in law. 
Such Euclidean norms, introduced for shape matching in~\citep{glaunes2004diffeomorphic}, are often referred to as ``Maximum Mean Discrepancies'' (MMD)~\citep{gretton2007kernel}.
They have been extensively used for generative model (GANs) fitting in machine learning~\citep{li2015generative,MMD-GAN}. 
MMD norms are cheaper to compute than OT and have a smaller \emph{sample complexity} -- i.e. approximation error when sampling a distribution. 
\subsection{Interpolating between OT and MMD using Sinkhorn divergences}

Unfortunately though, the ``flat'' geometry
that MMDs induce on the space of probability measures $\Mm_1^+(\Xx)$ does not faithfully lift the ground distance on $\Xx$. For instance, on $\Xx=\RR^\D$, let us denote by $\al_\tau$ the translation of $\al$ by $\tau \in \RR^\D$, defined through $\dotp{\al_\tau}{f} = \dotp{\al}{f(\cdot+\tau)}$ for continuous functions $f \in \Cc(\RR^\D)$.
Wasserstein distance discrepancies defined for $\C(x,y)=\norm{x-y}^p$ are such that $\OT_0(\al,\al_\tau)^{\frac{1}{p}} = \norm{\tau}$.

In sharp contrast, MMD norms rely on \emph{convolutions}
that weigh the frequencies of $\xi=\al-\be$ independently,
according to the Fourier transform of the kernel function.
For instance, up to a multiplicative constant of $\D$,
the energy distance 
$\L_{-\|\,\cdot\,\|}(\al,\al_\tau)$ 
is given by $\int_{\R^\D} (1-\cos(\langle \omega,\tau\rangle))~|\hat{\alpha}(\omega)|^2/\|\omega\|^{\D+1}d\omega$ 
 (\cite{SR05}, Lemma 1), where $\hat{\alpha}(\omega)$ is the Fourier transform of the probability measure $\alpha$.
Except for the trivial case of a Dirac mass $\al=\de_{x_0}$ for some $x_0 \in \RR^\D$, we thus always have $\L_{-\|\,\cdot\,\|}(\al,\al_\tau)<\norm{\tau}$ with
a value that strongly depends on the \emph{smoothness}
of the reference measure $\al$.
In practice, as evidenced in Figure~\ref{fig:flow2d}, this
theoretical shortcoming of MMD losses 
is reflected by \emph{vanishing gradients}
(or similar artifacts) next to the extreme points
of the measures' supports.


\textbf{Sinkhorn divergences.}
On the one hand, OT losses have appealing \emph{geometric} properties; on the other hand, cheap MMD norms scales up to large batches with a low sample complexity.
Why not \emph{interpolate} between them to get the best of both worlds?

Following~\citep{genevay2018learning} (see also~\citep{RamdasSinkhAsymptotics,salimans2018improving,SanjabiSinkhGAN}) we consider 
a new cost built from $\OT_\epsilon$ that we call a \emph{Sinkhorn divergence}:
\begin{equation} \label{eq-sinkhorn-div}
	\Wb_\epsilon(\al,\be) \!\eqdef\! \W_\epsilon(\al,\be) \!-\! \tfrac{1}{2}\W_\epsilon(\al,\al) \!-\! \tfrac{1}{2}\W_\epsilon(\be,\be).
\end{equation}
Such a formula satisfies $\Wb_\epsilon(\beta,\beta) = 0$ and interpolates between OT and MMD~\citep{RamdasSinkhAsymptotics}:
\eql{ \label{eq:sinkhorn-interpolation}
	\OT_0(\al,\be) 
	\:\overset{0 \leftarrow \epsilon}{\longleftarrow}\:
	\Wb_\epsilon(\al,\be) 
	\:\overset{\epsilon\rightarrow +\infty}{\longrightarrow}\:
	\tfrac{1}{2}\norm{\al-\be}_{-\C}^2.
}

\paragraph{The entropic bias.}
Why bother with the auto-correlation terms $\OT_\epsilon(\al,\al)$
and $\OT_\epsilon(\be,\be)$?
For positive values of $\epsilon$, in general, $\OT_\epsilon(\be,\be) \neq 0$ 
so that minimizing $\OT_\epsilon(\al,\be)$ with respect to $\al$ results in a biased solution:
as evidenced by Figure~\ref{fig-example-intial},
the gradient of $\OT_\epsilon$ drives $\al$ towards a shrunk
measure whose support is \emph{smaller} than that
of the target measure $\be$.
This is most evident as $\epsilon$ tends to infinity: 
$\OT_\epsilon(\al,\be)\rightarrow \iint \C(x,y)\,\d\al(x)\,\d\be(y)$, a quantity that is minimized
if $\al$ is a Dirac distribution located at the median 
(\emph{resp.} the mean) value of $\beta$
if $\C(x,y)=\|x-y\|$ (\emph{resp.} $\|x-y\|^2$).

In the literature, the formula \eqref{eq-sinkhorn-div} has been introduced
more or less empirically to fix the \emph{entropic bias} 
present in the $\OT_\epsilon$ cost:
with a structure that mimicks that of a squared kernel
norm~\eqref{def-mmd}, it was assumed or conjectured that $\S_\epsilon$
would define a positive definite loss function, suitable for
applications in ML.
This paper is all about \emph{proving} that this is indeed what happens.
%



\subsection{Contributions}
The purpose of this paper is to show that the Sinkhorn divergences are convex, smooth, positive definite loss functions that metrize the convergence in law.
Our main result is the theorem below,
that ensures that one can indeed use $\S_\epsilon$ as a \emph{reliable} loss function for ML applications --
whichever value of $\epsilon$ we pick.

\begin{thm}
\label{thm-global-positivity}
  Let $\Xx$ be a compact metric space with a Lipschitz cost function $\C(x,y)$ that induces,
  for $\epsilon>0$, a \emph{positive universal} kernel $k_\epsilon(x,y) \eqdef \exp(-\C(x,y)/\epsilon)$.
  Then, $\S_\epsilon$ defines a symmetric positive definite,
  smooth loss function that is convex in each of its input variables.
  It also metrizes the convergence in law:
  for all probability Radon measures $\al$ and $\be\in\Mm_1^+(\Xx)$,
	\begin{align}
	  0~=~&\Wb_\epsilon(\be,\be)~\leqslant~\Wb_\epsilon(\al,\be),~ \label{eq:positivity_sinkh}\\
	\al=\be~&~\Longleftrightarrow~~\Wb_\epsilon(\al,\be)=0, \label{eq:divergence_definite}\\
	\al_n\rightharpoonup\al~&~\Longleftrightarrow~~\Wb_\epsilon(\al_n,\al) \label{eq:weak-convergence}
\rightarrow 0.
	\end{align}
	Notably, these results also hold for measures
	with bounded support on a Euclidean space
	$\Xx=\R^\D$ endowed with ground cost
	functions $\C(x,y)=\|x-y\|$ or~~ $\C(x,y)=\|x-y\|^2$ --
	which induce Laplacian and Gaussian kernels
	respectively.
\end{thm}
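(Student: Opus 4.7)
The approach I would take rests on a dual reformulation of the entropic OT problem together with the introduction of \emph{symmetric Sinkhorn potentials} for the auto-correlation terms. For any pair $(\al,\be) \in \Mm_1^+(\Xx)^2$, strong duality applied to \eqref{eq-primal-ot} yields optimal potentials $f_{\al,\be}, g_{\al,\be} \in \Cc(\Xx)$ satisfying Sinkhorn fixed-point equations of the form $e^{-f_{\al,\be}(x)/\epsilon} = \int k_\epsilon(x,y)\,e^{g_{\al,\be}(y)/\epsilon}\,\d\be(y)$ and its symmetric counterpart, with $\OT_\epsilon(\al,\be)$ expressible as a linear functional of $(f_{\al,\be}, g_{\al,\be})$ tested against $(\al,\be)$. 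For the auto-correlation case $\OT_\epsilon(\al,\al)$, uniqueness up to an additive constant together with marginal symmetry allow the choice $f_{\al,\al} = g_{\al,\al}$; I would call this common function $p_\al$, the ``geometric entropy'' potential alluded to in the abstract. A direct substitution then yields the compact representation
\begin{equation*}
\Wb_\epsilon(\al,\be) \;=\; \int (f_{\al,\be} - p_\al)\,\d\al \;+\; \int (g_{\al,\be} - p_\be)\,\d\be.
\end{equation*}

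The main theoretical content lies in \eqref{eq:positivity_sinkh} and \eqref{eq:divergence_definite}. I would subtract the fixed-point equations satisfied by $f_{\al,\be}$ and $p_\al$ to express $f_{\al,\be}(x) - p_\al(x)$ as the logarithm of the ratio of two integrals against the Gibbs kernel $k_\epsilon(x,\cdot)$, and apply Jensen's inequality to $-\log$ against a carefully chosen probability measure built from the remaining potentials. This bounds $\Wb_\epsilon(\al,\be)$ from below by a non-negative quantity, with equality forcing certain conditional densities to coincide $\al$- and $\be$-almost everywhere. The positive universality of $k_\epsilon$, which embeds $\Mm_1^+(\Xx)$ injectively into $\Cc(\Xx)$ via $\al \mapsto \int k_\epsilon(\cdot,y)\,\d\al(y)$, then upgrades this almost-everywhere equality of densities to $\al=\be$.

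Convexity in each argument follows because, by duality, $\al \mapsto \OT_\epsilon(\al,\be)$ is a supremum of linear functionals in $\al$ and hence convex; the auto-correlation term is then analyzed by differentiating through the envelope theorem and using the symmetric fixed-point equation, with the debiasing restoring convexity of the overall divergence. Smoothness follows from the implicit function theorem applied to the fixed-point system. For the weak-$*$ metrization \eqref{eq:weak-convergence}, Lipschitz continuity of $\C$ and the soft-min structure of the Sinkhorn operator yield equicontinuity and uniform boundedness of the potentials as the marginals vary in $\Mm_1^+(\Xx)$; Arzelà--Ascoli and dominated convergence then give $\Wb_\epsilon(\al_n,\al) \to 0$ whenever $\al_n \rightharpoonup \al$, while the converse uses weak-$*$ compactness of $\Mm_1^+(\Xx)$ to extract a limit $\tilde\al$ of a subsequence of $(\al_n)$, which by the forward direction satisfies $\Wb_\epsilon(\tilde\al,\al) = 0$ and hence $\tilde\al=\al$ by the definiteness just established. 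The Euclidean specialization is immediate since Gaussian and Laplacian kernels are positive universal on any compact subset of $\R^\D$.

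The hard part is the positivity/definiteness step: the corrections $\tfrac{1}{2}\OT_\epsilon(\al,\al) + \tfrac{1}{2}\OT_\epsilon(\be,\be)$ must exactly absorb the entropic bias of $\OT_\epsilon(\al,\be)$, and this cancellation only becomes visible after a delicate manipulation of four coupled fixed-point equations. Selecting the correct reference probability measure against which to invoke Jensen's inequality, so that the resulting bound is both sharp (positive) and tight enough to characterize equality via the universality of $k_\epsilon$, is where the real work lies; everything else follows relatively mechanically from the standard regularity theory of entropic OT.
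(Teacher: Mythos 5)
Your sketch correctly identifies several structural ingredients that the paper also uses: the dual formulation with Sinkhorn fixed-point equations, the choice of a single symmetric potential $p_\al$ for the auto-correlation term, the compact representation $\S_\epsilon(\al,\be)=\int(f-p_\al)\,\d\al+\int(g-p_\be)\,\d\be$, the role of positive universality in upgrading an a.e.\ equality to $\al=\be$, and the Arzel\`a--Ascoli plus weak-$*$ compactness argument for~\eqref{eq:weak-convergence}. The overall plan is reasonable, but two steps are not actually carried out and, as stated, one of them is wrong.

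The more serious gap is the convexity claim. You assert that $\al\mapsto\OT_\epsilon(\al,\be)$ is convex (correct, it is a sup of linear functionals) and that ``debiasing restores convexity of the overall divergence,'' but this hides the real difficulty. The auto-correlation term $-\tfrac12\OT_\epsilon(\al,\al)$ is obtained by restricting a \emph{separately} (not jointly) convex function to its diagonal and negating; there is no reason, a priori, for this to be convex, and the envelope theorem only gives differentiability, not a sign on the second variation. The paper resolves this by proving a genuinely non-obvious fact: after the change of variable $\mu=\exp(\f/\epsilon)\,\al$ in the symmetric dual problem, the negentropy $\F_\epsilon(\al)=-\tfrac12\OT_\epsilon(\al,\al)$ admits the representation
\begin{align*}
\tfrac{1}{\epsilon}\F_\epsilon(\al)+\tfrac12=\min_{\mu\in\Mm^+(\Xx)}\langle\al,\log\tfrac{\d\al}{\d\mu}\rangle+\tfrac12\|\mu\|_{k_\epsilon}^2,
\end{align*}
a partial minimization of a jointly convex functional (a $\KL$ term plus the squared RKHS norm), and is therefore \emph{strictly convex} -- strictness coming precisely from the positive universality of $k_\epsilon$. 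Nothing in your sketch establishes this, and without it neither the convexity of $\S_\epsilon$ in each argument nor the paper's definiteness argument goes through.

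On the positivity/definiteness step, your ``Jensen on $-\log$ against a carefully chosen measure'' is left entirely abstract; you explicitly say this is where the real work lies, but you do not produce the reference measure or the inequality. The paper takes a different route: it forms the symmetric Bregman divergence $\H_\epsilon(\al,\be)=\tfrac12\langle\al-\be,\nabla\F_\epsilon(\al)-\nabla\F_\epsilon(\be)\rangle\geqslant0$ of the strictly convex $\F_\epsilon$, and then uses separate convexity of $\W_\epsilon$ together with $\nabla_2\W_\epsilon(\al,\al)=-\nabla\F_\epsilon(\al)$ to get $\H_\epsilon\leqslant\S_\epsilon$, which gives~\eqref{eq:positivity_sinkh} and, via strict convexity of $\F_\epsilon$, also~\eqref{eq:divergence_definite}. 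A more direct alternative you might have in mind (plugging the suboptimal pair $(p_\al,p_\be)$ into the dual of $\OT_\epsilon(\al,\be)$, noting $\|e^{p_\al/\epsilon}\al\|_{k_\epsilon}=1$, and invoking Cauchy--Schwarz in the RKHS) does work -- but that is Cauchy--Schwarz, not Jensen, and it still requires the identity $\tfrac12\OT_\epsilon(\al,\al)=\langle\al,p_\al\rangle$, which comes from the mass-one constraint on the symmetric plan. As written, your proposal neither supplies the Bregman/entropy mechanism nor the Cauchy--Schwarz one, so the central claim remains unproved.
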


This theorem legitimizes the use of the \emph{unbiased}
Sinkhorn divergences $\S_\epsilon$ instead of $\W_\epsilon$ in model-fitting applications.
Indeed, computing $\Wb_\epsilon$ is roughly as expensive as $\W_\epsilon$
(the computation of the corrective factors
being cheap, as detailed in Section~\ref{sec:practice}) and the ``debiasing''
formula~\eqref{eq-sinkhorn-div} allows us to guarantee
that the unique minimizer of $\al \mapsto \Wb_\epsilon(\al,\be)$
is the target distribution $\be$ (see Figure~\ref{fig-example-intial}).
%
Section~\ref{sec:practice} details how to implement
these divergences efficiently: our algorithms scale up to millions of samples
thanks to freely available GPU routines.
To conclude, we showcase
in Section~\ref{sec:flow} the typical behavior of $\S_\epsilon$
compared with $\OT_\epsilon$ and standard MMD losses.

\if 0
\subsection{Main assumptions and notations}
\label{sec:assumptions}

\begin{itemize}
  \item To ensure that all the quantities
considered are finite,
we assume that our feature space $\Xx$,
endowed with a distance function $\d$, is \emph{compact}.
  \item For $(\al,f) \in \Mm(\Xx) \times \Cc(\Xx)$,
we denote the duality relationship using duality brackets: $\dotp{\al}{f} \eqdef \int_\Xx f\, \d\alpha$.
In the Euclidean setting -- and in practice -- its computation is an inner product. 
  \item The transport cost function $\C:(x,y)\in \Xx^2\mapsto
\C(x,y)\in\R$ is assumed here to be symmetric and lipschitz in both variables. 
In what follows $\C$ will induce a kernel $k_\epsilon = e^{-\C/\epsilon}$ which is assumed to be positive definite.
\end{itemize}
\fi

%





\section{Proof of Theorem~\ref{thm-global-positivity}}
\label{sec:theory}

We now give the proof of Theorem~\ref{thm-global-positivity}. 
Our argument relies on a new Bregman divergence derived from a weak$^*$ continuous entropy that we call the \emph{Sinkhorn entropy} (see Section~\ref{sec-sink-div}). We believe this (convex) entropy function to be of independent interest. 
Note that all this section is written under the assumptions
of Theorem~\ref{thm-global-positivity};
the proof of some intermediate results can be found in the appendix. 

\begin{figure}[b!]
\centering

\begin{subfigure}[t]{.4\linewidth}
\centering
\includegraphics[width=\textwidth]{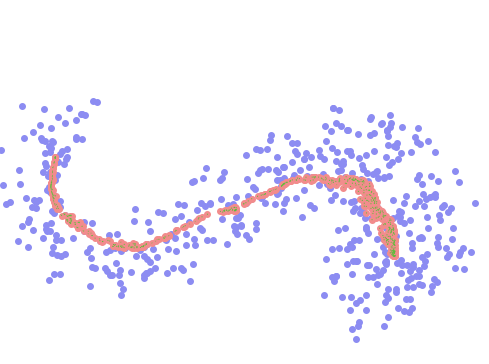}
\caption{$\L=\W_\epsilon$}
\end{subfigure}
\qquad
\begin{subfigure}[t]{.4\linewidth}
\centering
\includegraphics[width=\textwidth]{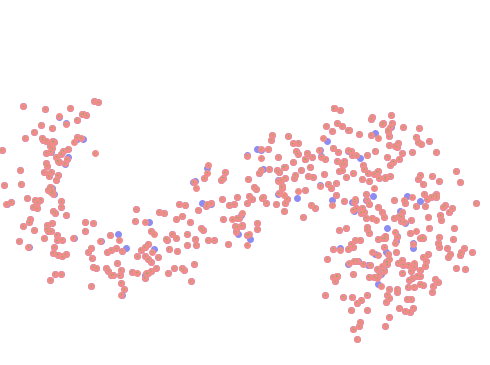}
\caption{$\L=\Wb_\epsilon$}
\end{subfigure}
\caption{\label{fig-example-intial}\textbf{Removing the entropic bias.}
Solution $\al$ (in red) of the fitting problem $\min_\al \L(\al,\be)$ for some $\be$ shown in blue.
Here, $\C(x,y)=\|x-y\|$ on the unit square $\Xx$ in $\R^2$ and
$\epsilon=\texttt{.1}$.
The positions of the red dots were optimized by gradient descent,
starting from a normal Gaussian sample.
}
\end{figure}

\subsection{Properties of the $\OT_\epsilon$ loss}
\label{sec:entrop-reg}

First, let us recall some standard results
of regularized OT theory~\citep{peyre2017computational}.
Thanks to the Fenchel-Rockafellar theorem,
we can rewrite Cuturi's loss~\eqref{eq-primal-ot} as
\begin{align} \label{eq-dual}
\W_\epsilon(\al,\be)
\eqdef 
\max_{(\f, \g)\in \Cc(\Xx)^2}&
\langle\al,\f\rangle
+
\langle\be,\g\rangle \\
-\epsilon 
\langle \al\otimes\be &, 
\exp\big( \tfrac{1}{\epsilon}(\f\oplus \g - \C ) \big) - 1 \rangle,\nonumber
\end{align}
where $\f\oplus\g$ is the tensor sum $(x,y) \in \Xx^2\mapsto \f(x) + \g(y)$.
The primal-dual relationship linking an optimal transport plan
$\pi$ solving~\eqref{eq-primal-ot} to an optimal
dual pair $(\f,\g)$ that solves~\eqref{eq-dual} is
\begin{align}
	\label{eq-primal-dual}
	\pi
	=
	\exp \big( \tfrac{1}{\epsilon} 
	(\f\oplus\g-\C) \big)\cdot(\al\otimes\be).
\end{align}
Crucially, the first order optimality conditions for the dual variables
are equivalent to the primal's marginal constraints
$(\pi_1=\al, \pi_2=\be)$ on~\eqref{eq-primal-dual}.
They read
\begin{align}\label{eq:optim-conditions-dual}
	 \f = \T(\be,\g)\;~ \text{$\al$-a.e.}
	\qandq
	\g = \T(\al,\f)\;~ \text{$\be$-a.e.},
\end{align}
where the ``Sinkhorn mapping'' $\T: \Mm_1^+(\Xx)\times\Cc(\Xx) \rightarrow \Cc(\Xx)$
is defined through
\eql{\label{eq:sinkhorn-op}
	\T : (\al,\f) \mapsto \bigg( y\in\Xx \mapsto 
	\mine{x\sim\al} \big[ 
	 \C(x,y) -\f(x)	
	\big]
	\bigg),
}
with a SoftMin operator of strength $\epsilon$ defined through
\eql{\label{eq:defn-softmin}
	\mine{x \sim \al} \phi(x) 
	\eqdef 
	-\epsilon\log \int_\Xx \exp\big(-\tfrac{1}{\epsilon} \phi(x)\big)\, \d\al(x).
\vspace{-.3cm}
}

\paragraph{Dual potentials.}

The following proposition recalls some important properties of $\OT_\epsilon$ and the associated dual potentials. Its proof can be found in Section~\ref{appendix:dual-pot}.  

\begin{prop}[Properties of $\W_\epsilon$]\label{prop:dual-pot-properties}
	The optimal potentials $(\f,\g)$ exist and are unique $(\al,\be)$-a.e. up to an additive constant, i.e. $\forall K \in\mathbb{R}, \,(\f + K, \g -K)$ is also optimal.
	At optimality, we get
	\eql{\label{eq:strong_duality}
		\W_\epsilon(\al,\be) = \langle\al,\f\rangle+\langle\be,\g\rangle.
	}
\end{prop}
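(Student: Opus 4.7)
The plan is to establish the three claims in sequence---existence, uniqueness up to an additive constant, and strong duality---using the Sinkhorn fixed-point equations \eqref{eq:optim-conditions-dual} and the regularity of the SoftMin operator \eqref{eq:defn-softmin}. For existence, I would first observe that the Sinkhorn mapping preserves the Lipschitz modulus of the cost: for any $\al\in\Mm_1^+(\Xx)$ and continuous $\f$, the function $y\mapsto \T(\al,\f)(y) = \mine{x\sim\al}[\C(x,y)-\f(x)]$ inherits the Lipschitz constant of $\C(\cdot,y)$ in $y$, because the SoftMin is $1$-Lipschitz for uniform perturbations of its integrand and $\C$ is Lipschitz by assumption. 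Launching the alternate updates $\f_{n+1} = \T(\be,\g_n)$, $\g_{n+1} = \T(\al,\f_{n+1})$ from $\f_0\equiv 0$ and quotienting out the constant ambiguity (for instance by imposing $\langle\al,\f_n\rangle = 0$), the iterates remain uniformly bounded---since $\C$ is bounded on the compact $\Xx$---with a common Lipschitz constant. Arzelà-Ascoli then extracts a subsequence converging uniformly to a limit $(\f^\star,\g^\star)\in\Cc(\Xx)^2$, which is a fixed point of $\T$ by continuity, hence a candidate optimum for \eqref{eq-dual}.

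For uniqueness, the fixed-point equations $\f=\T(\be,\g)$ and $\g=\T(\al,\f)$ are manifestly invariant under the shift $(\f,\g)\mapsto(\f+K,\g-K)$, since $\T(\al,\f+K)=\T(\al,\f)-K$ by direct inspection of \eqref{eq:sinkhorn-op}. For the converse direction I would invoke Birkhoff-Hopf: passing to multiplicative variables of the form $u=e^{\f/\epsilon}$, $v=e^{\g/\epsilon}$, the coupled Sinkhorn updates rewrite as a composition of positive integral operators against the strictly positive kernel $e^{-\C/\epsilon}$ weighted by $\al$ and $\be$. Such operators are strict contractions in the Hilbert projective metric on the cone of positive continuous functions, so they admit a unique Perron eigenvector up to positive rescaling, which in logarithmic coordinates yields uniqueness of $(\f^\star,\g^\star)$ up to an additive constant. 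An equivalent, more elementary route would be to exploit strict concavity of the dual functional \eqref{eq-dual} on any slice transverse to constant shifts, since its Hessian equals minus the covariance operator of the coupling $\pi^\star \eqdef \exp(\tfrac{1}{\epsilon}(\f^\star\oplus\g^\star-\C))\cdot(\al\otimes\be)$, which has full support on $\Xx^2$.

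For strong duality, substituting the fixed-point identities into the primal-dual relation \eqref{eq-primal-dual} shows that $\pi^\star$ has marginals $\al$ and $\be$, hence total mass $1$. Therefore $\langle\al\otimes\be,\exp(\tfrac{1}{\epsilon}(\f^\star\oplus\g^\star-\C))\rangle=\pi^\star(\Xx^2)=1$, the penalty term in \eqref{eq-dual} collapses to zero at the optimum, and we are left with $\W_\epsilon(\al,\be)=\langle\al,\f^\star\rangle+\langle\be,\g^\star\rangle$ as claimed. The main obstacle is the uniqueness step: existence is a soft compactness argument and strong duality is essentially bookkeeping once the primal-dual relation is available, whereas uniqueness genuinely requires breaking the symmetry of the dual, either through a Birkhoff-type contraction estimate or a strict-concavity analysis on a slice transverse to the constant direction.
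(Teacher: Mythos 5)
Your proof covers the same ground as the paper's and adds a useful explicit argument for strong duality, but the three steps are handled differently and the existence step has a small gap worth flagging.

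For existence, the paper simply invokes the Hilbert projective metric contraction of the Sinkhorn map (citing Franklin--Lorenz). Your Arzelà--Ascoli route from the iterates is a legitimate alternative, and the a priori bounds (uniform Lipschitz constant $\kappa$ inherited from $\C$, uniform boundedness after normalizing $\langle\al,\f_n\rangle=0$) are correctly identified. However, the line ``which is a fixed point of $\T$ by continuity'' does not follow as stated: a subsequence $(\f_{n_k},\g_{n_k})\to(\f^\star,\g^\star)$ gives $\g^\star=\T(\al,\f^\star)$ via continuity of the SoftMin, but $\f_{n_k}=\T(\be,\g_{n_k-1})$ involves the \emph{shifted} index, and $\g_{n_k-1}$ need not converge to $\g^\star$ along the same subsequence. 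The standard patch is either to show that the successive updates are asymptotically stationary ($\|\g_{n+1}-\g_n\|_\infty\to 0$, from monotone convergence of the dual objective plus local strong concavity in each block), or, cleaner still, to avoid the iteration altogether: the normalized equi-Lipschitz pairs form a compact convex subset of $\Cc(\Xx)^2$ on which the dual objective is continuous, so the supremum is attained. The Birkhoff--Hopf contraction the paper cites is precisely what makes your iterate-based approach airtight, since it gives actual convergence of the full sequence.

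For uniqueness, your second (``elementary'') route is essentially the paper's argument, reformulated: the paper interpolates between two optimal pairs, observes that concavity forces the dual objective to be constant along the segment, and reads off from the vanishing second derivative that $(\f_1-\f_0)\oplus(\g_1-\g_0)=0$ holds $\al\otimes\be$-a.e.; your Hessian/covariance-operator phrasing encodes the same fact. Your Birkhoff--Hopf alternative is also fine and is the more standard textbook route.

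For strong duality, your argument (the optimal $\pi^\star$ has unit mass because its marginals are $\al$ and $\be$, so the exponential penalty term in \eqref{eq-dual} vanishes at optimality) is clean and correct; the paper does not spell this step out, so this is a genuine addition.
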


We recall that a function $\F : \Mm_1^+(\Xx) \rightarrow \RR$ is said to be \emph{differentiable} if there exists $\nabla \F(\al) \in \Cc(\Xx)$ such that for any displacement $\xi = \be-\be'$ with $(\be,\be') \in \Mm_1^+(\Xx)^2$,  we have 
\eql{ \label{eq:gradient-Radon}
	\F(\al+t\xi) = \F(\al) + t \dotp{\,\xi}{\nabla \F(\al)} + o(t). 
}
The following proposition, whose proof is detailed in Section~\ref{appendix:prop:differentiability-ot}, shows that the dual potentials are the gradients of $\OT_\epsilon$. 

\begin{prop}\label{prop:differentiability-ot}
	 $\W_\epsilon$ is \emph{weak* continuous} and \emph{differentiable}. Its gradient reads
  	\begin{align}\label{ot-gradient}
    	\nabla \W_\epsilon (\al,\be) = (\f,\g)    
  	\end{align}
	where $(\f,\g)$ satisfies $\f= \T(\be,\g)$ and $\g= \T(\al,\f)$ on the whole domain $\Xx$ and $\T$ is the Sinkhorn mapping~\eqref{eq:sinkhorn-op}.
\end{prop}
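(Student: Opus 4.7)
The plan is to handle the two claims separately: weak* continuity via a compactness argument on the optimal dual potentials, and differentiability via a Danskin-type envelope argument that leverages the uniqueness statement of Proposition~\ref{prop:dual-pot-properties}.

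\textbf{Weak* continuity.} From the dual formulation~\eqref{eq-dual}, for any fixed $(\f,\g)\in\Cc(\Xx)^2$ the objective is weak* continuous in $(\al,\be)$, so $\W_\epsilon$ is immediately weak* lower semicontinuous as a supremum. To obtain the reverse inequality along $(\al_n,\be_n)\rightharpoonup(\al,\be)$, I would extract optimal potentials $(\f_n,\g_n)$ and extend them continuously to all of $\Xx$ via the Sinkhorn mapping $\f_n = \T(\be_n,\g_n)$ and $\g_n = \T(\al_n,\f_n)$. The Lipschitz assumption on $\C$, together with the fact that the SoftMin~\eqref{eq:defn-softmin} preserves the Lipschitz modulus of its integrand, makes $\{\f_n,\g_n\}$ equi-Lipschitz; after fixing the additive-constant ambiguity by setting $\f_n(x_0)=0$ at a basepoint, they are also uniformly bounded by $\|\C\|_\infty$. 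Arzelà--Ascoli then yields a uniformly convergent subsequence with limit $(\f^\star,\g^\star)$, and passing to the limit in the Sinkhorn equations and in the dual objective exhibits a valid dual pair achieving $\W_\epsilon(\al,\be)$.

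\textbf{Differentiability.} Here I would invoke an envelope theorem. The inner objective $F(\al,\be;\f,\g)$ of~\eqref{eq-dual} is smooth in $(\al,\be)$ with continuous partial derivatives, and Proposition~\ref{prop:dual-pot-properties} provides uniqueness of the optimal $(\f,\g)$ modulo the one-dimensional additive-constant gauge. Hence for a zero-mass displacement $\xi=\be-\be'$ one may differentiate under the sup:
\begin{equation*}
\tfrac{d}{dt}\Big|_{t=0}\W_\epsilon(\al+t\xi,\be) \;=\; \dotp{\xi}{\f} \;-\; \epsilon\int_\Xx\Big(\int_\Xx e^{(\f(x)+\g(y)-\C(x,y))/\epsilon}\,\d\be(y) \,-\, 1\Big)\,\d\xi(x),
\end{equation*}
where $(\f,\g)$ is the optimal pair extended to $\Xx$ via $\T$. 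By construction of the SoftMin, $\f=\T(\be,\g)$ is precisely the pointwise identity $\int e^{(\f(x)+\g(y)-\C(x,y))/\epsilon}\,\d\be(y)=1$ on all of $\Xx$, so the correction bracket vanishes identically and the derivative reduces to $\dotp{\xi}{\f}$. The remaining additive-constant ambiguity in $\f$ is killed by the zero-mass condition $\dotp{\xi}{1}=0$, so~\eqref{ot-gradient} holds in the sense of~\eqref{eq:gradient-Radon}; the symmetric argument gives $\nabla_\be\W_\epsilon=\g$.

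The main obstacle is the compactness/equicontinuity step: one must rigorously propagate uniform Lipschitz bounds on $(\f_n,\g_n)$ across a weakly convergent sequence whose supports are themselves moving, and verify that the natural gauge fixing is compatible with passing to the limit in the Sinkhorn equations. Once these uniform estimates are in place, both weak* continuity and the interchange of differentiation and supremum required by the envelope argument follow routinely from standard convex-analytic reasoning.
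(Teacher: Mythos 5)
Your proposal is correct and follows essentially the same route as the paper. The compactness step you identify as the main obstacle is precisely the paper's Proposition~\ref{lem-uniform-conv}: gauge fix $\f_n(x_o)=0$, use the Lipschitz bound inherited from $\C$ through the SoftMin to get equicontinuity and uniform boundedness, apply Ascoli--Arzel\`a, pass to the limit in~\eqref{eq:dual_optimal_smin} via continuity of the SoftMin, and invoke uniqueness (Proposition~\ref{prop:uniqueness_dual}) to force full-sequence convergence. Weak* continuity then follows from the strong-duality formula~\eqref{eq:strong_duality}. For differentiability, the ``envelope theorem'' you appeal to is exactly what the paper proves by hand via a two-sided bound on the difference quotient $\De_t=(\W_\epsilon(\al_t,\be_t)-\W_\epsilon(\al,\be))/t$: taking $(\f,\g)$ suboptimal for the perturbed problem gives a lower bound and taking $(\f_t,\g_t)$ suboptimal for the unperturbed one gives an upper bound, both collapsing onto $\langle\delta\al,\f\rangle+\langle\delta\be,\g\rangle$ once the marginal optimality conditions kill the exponential term and the zero-mass condition kills the constant $\epsilon$. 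So your observation that the correction bracket vanishes identically by $\f=\T(\be,\g)$ is the same algebra the paper uses; the only thing you leave implicit is the second half of the sandwich, which is where the uniform convergence of $(\f_t,\g_t)\to(\f,\g)$ from the Ascoli--Arzel\`a step gets used again. One small correction: the uniform bound on the extracted potentials is not $\|\C\|_\infty$ per se; it comes from the gauge fixing together with the $\kappa$-Lipschitz modulus (giving a bound of $\kappa\cdot\mathrm{diam}(\Xx)$ on $\f_n$) and is then propagated to $\g_n$ through the Sinkhorn equation, which is exactly how the paper argues.
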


Let us stress that even though the solutions of the dual problem~\eqref{eq-dual} are defined $(\al,\be)$-a.e., the gradient~\eqref{ot-gradient} is defined on the whole domain $\Xx$. Fortunately, an optimal dual pair $(\f_0,\g_0)$ defined $(\al,\be)$-a.e. satisfies the optimality condition~\eqref{eq:optim-conditions-dual} and can
be \emph{extended} in a canonical way: 
to compute the ``gradient'' pair $(\f,\g)\in\Cc(\Xx)^2$
associated to a pair of measures $(\al,\be)$,
using $\f= \T(\be,\g_0)$ and $\g= \T(\al,\f_0)$ is enough.

\subsection{Sinkhorn and Haussdorf divergences}
\label{sec-sink-div}

Having recalled some standard properties
of $\OT_\epsilon$, 
let us now state a few \emph{original} facts about 
the corrective, symmetric term 
$-\tfrac{1}{2}\W_\epsilon(\al,\al)$ used in~\eqref{eq-sinkhorn-div}.
We still suppose
that $(\Xx,\d)$ is a compact set endowed with
a symmetric, Lipschitz cost function $\C(x,y)$.
For $\epsilon>0$, the associated \emph{Gibbs kernel} is defined through
 \begin{align}
  k_\epsilon : (x,y)\in\Xx\times\Xx\mapsto\exp\big( -\C(x,y)/\epsilon\big).
\end{align}
Crucially, we now assume that $k_\epsilon$ is a \emph{positive universal} 
kernel on the space of signed Radon measures.

\begin{defn}[Sinkhorn negentropy]\label{def:sinkh-entrop}
Under the assumptions above, we define
the Sinkhorn negentropy of a probability Radon measure
$\al\in\Mm_1^+(\Xx)$ through
\begin{align}\label{form:sinkh-entrop}
\F_\epsilon(\al)\eqdef-\tfrac{1}{2}\W_\epsilon(\al,\al).
\end{align}
\end{defn}

The following proposition is the cornerstone of our approach to prove the positivity of $\S_\epsilon$, providing an alternative expression of $\F_\epsilon$.
Its proof relies on a change of variables
$\mu=\exp(\f/\epsilon)\,\al$
in \eqref{eq-dual} that is detailed in the Section~\ref{appendix:prop:chgt_variable} of the appendix.

\begin{prop}
\label{prop:chgt_variable}
Let $(\Xx,\d)$ be a compact set endowed with a symmetric, Lipschitz cost function $\C(x,y)$ that induces a \emph{positive} kernel $k_\epsilon$.
Then, for $\epsilon > 0$ and $\al\in\Mm_1^+(\Xx)$, one has
\begin{alignat}{8}
\label{eq-sinkh-entropy}
	\tfrac{1}{\epsilon}\F_\epsilon(\al)
	+\tfrac{1}{2}&= 
& &\min_{\mu\in\Mm^+(\Xx)}
&\langle \al, \log \tfrac{\d \al}{\d \mu} \rangle
&+ \tfrac{1}{2} \|\mu\|_{k_\epsilon}^2
&&. 
\end{alignat}
\end{prop}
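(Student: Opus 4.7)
The strategy is to manipulate the dual formulation~\eqref{eq-dual} of $\W_\epsilon(\al,\al)$ via the suggested change of variable $\mu=e^{\f/\epsilon}\al$. By Proposition~\ref{prop:dual-pot-properties} an optimal dual pair $(\f,\g)$ exists, and because both marginals coincide and $\C$ is symmetric, the concavity of the dual objective together with the swap symmetry $(\f,\g)\mapsto(\g,\f)$ lets me select an optimizer with $\f=\g=:h$. Consequently,
$$\W_\epsilon(\al,\al)\;=\;\sup_{h\in\Cc(\Xx)}\Big\{2\langle\al,h\rangle-\epsilon\langle\al\otimes\al,\,e^{(h\oplus h-\C)/\epsilon}-1\rangle\Big\}.$$

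Setting $\mu\eqdef e^{h/\epsilon}\al$, a positive finite Radon measure with continuous density with respect to $\al$, I would then rewrite each term using $h=\epsilon\log(\d\mu/\d\al)$: the linear part becomes $2\langle\al,h\rangle=-2\epsilon\langle\al,\log(\d\al/\d\mu)\rangle$, the exponential part factorizes as $\langle\al\otimes\al,\,e^{(h\oplus h)/\epsilon}k_\epsilon\rangle=\|\mu\|_{k_\epsilon}^2$, and $\langle\al\otimes\al,1\rangle=1$ because $\al$ is a probability measure. Substituting these identities and dividing by $-2\epsilon$ (which turns the supremum into an infimum) yields~\eqref{eq-sinkh-entropy}, with the infimum \emph{a priori} restricted to the subset $\mathcal{S}=\{e^{h/\epsilon}\al:h\in\Cc(\Xx)\}\subset\Mm^+(\Xx)$.

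The main obstacle is to show that extending the minimization from $\mathcal{S}$ to the full cone $\Mm^+(\Xx)$ does not lower the value. Two observations help. First, the functional $\mu\mapsto\langle\al,\log(\d\al/\d\mu)\rangle$ is $+\infty$ unless $\al\ll\mu$, so I may restrict to absolutely continuous $\mu$. Second, the objective is strictly convex in $\mu$: the entropy term is convex by Jensen, and the quadratic term $\tfrac{1}{2}\|\mu\|_{k_\epsilon}^2$ is strictly convex thanks to the positive-definite universal nature of $k_\epsilon$. Perturbing $\mu$ by a signed Radon measure $\nu$ produces the first-order stationarity condition $(k_\epsilon\star\mu)\cdot\mu=\al$, from which $\d\mu/\d\al=1/(k_\epsilon\star\mu)$ is continuous and bounded away from zero on the compact space $\Xx$. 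The associated potential $h=-\epsilon\log(k_\epsilon\star\mu)$ then lies in $\Cc(\Xx)$, placing the unique minimizer in $\mathcal{S}$ and matching exactly the $\mu$ produced from the dual optimum. This is the one place that needs care, and it promotes the formal change of variable into a genuine characterization of $\F_\epsilon(\al)$ as a regularized relative entropy.
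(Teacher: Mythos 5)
Your algebraic manipulations — reducing to the diagonal $\f=\g=h$ by symmetry, substituting $\mu=e^{h/\epsilon}\al$, and identifying the three terms — coincide with the paper's and are correct; the one nontrivial step, and the place where you diverge from the paper, is the passage from $\inf_{\mu\in\mathcal{S}}$ to $\inf_{\mu\in\Mm^+(\Xx)}$, and there your argument has a genuine gap.

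You reason: \emph{if} a global minimizer $\mu^*$ over $\Mm^+(\Xx)$ exists, then first-order stationarity forces $\d\mu^*/\d\al=1/(k_\epsilon\star\mu^*)$, which is continuous and bounded away from zero, hence $\mu^*\in\mathcal{S}$. But you never establish that such a minimizer exists; without that, the inequality $\inf_{\Mm^+(\Xx)}\E \geqslant \inf_{\mathcal{S}}\E$ does not follow, and the whole extension step is open. (There are also two smaller inaccuracies: the stationarity condition for a problem constrained to the cone $\Mm^+(\Xx)$ is only an equality on $\mathrm{supp}(\mu^*)$ and an inequality elsewhere, so you must first argue that one may restrict $\mu^*$ to $\mathrm{supp}(\al)$; and you invoke \emph{universality} of $k_\epsilon$ for strict convexity, which Proposition~\ref{prop:chgt_variable} does not assume and which the identity does not require.) The paper avoids existence altogether: it first enlarges the dual variable from $\Cc(\Xx)$ to bounded measurable functions $\Ff_b(\Xx,\RR)$ by a density argument (the analogue of Proposition~\ref{dual-KL-continuous}), so the change of variables directly yields an infimum over all $\mu$ with $\al\ll\mu\ll\al$; it then drops $\mu\ll\al$ by noting that restricting any $\mu$ to $\mathrm{supp}(\al)$ can only decrease $\|\mu\|_{k_\epsilon}^2$ (since $k_\epsilon>0$) while leaving $\langle\al,\log\tfrac{\d\al}{\d\mu}\rangle$ unchanged; and it drops $\al\ll\mu$ because the entropy term is $+\infty$ otherwise. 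Your route can be repaired: since the objective is convex, instead of assuming a minimizer exists, take the concrete candidate $\mu_0=e^{\f^*/\epsilon}\al$ built from the \emph{attained} dual optimum over $\Cc(\Xx)$ and verify that it satisfies the first-order optimality conditions over all of $\Mm^+(\Xx)$ (equality on $\mathrm{supp}(\al)$ via the Sinkhorn fixed-point equation $\f^*=\T(\al,\f^*)$; strict positivity of the directional derivative for mass added off the support, since $k_\epsilon\star\mu_0>0$). Convexity then makes $\mu_0$ a global minimizer, closing the gap without any compactness argument.
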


The following proposition, whose proof can be found in the Section~\ref{appendix:prop:concavity} of the appendix, leverages the alternative expression~\eqref{eq-sinkh-entropy} to ensure the convexity of $\F_\epsilon$.

\begin{prop}
\label{prop:concavity}
Under the same hypotheses as Proposition~\ref{prop:chgt_variable}, $\F_\epsilon$ is a \emph{strictly convex} functional
  on $\Mm_1^+(\Xx)$.
\end{prop}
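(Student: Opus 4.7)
The plan is to leverage the variational representation~\eqref{eq-sinkh-entropy} in Proposition~\ref{prop:chgt_variable}, which exhibits $\tfrac{1}{\epsilon}\F_\epsilon(\al)+\tfrac{1}{2}$ as the partial minimization in $\mu\in\Mm^+(\Xx)$ of
$$\H(\al,\mu) \eqdef \langle\al,\log\tfrac{\d\al}{\d\mu}\rangle + \tfrac{1}{2}\|\mu\|_{k_\epsilon}^2.$$
I would first show that $\H$ is jointly convex on $\Mm_1^+(\Xx)\times\Mm^+(\Xx)$, from which convexity of $\F_\epsilon$ follows by partial minimization, and then promote convexity to strict convexity by exploiting the strict convexity of each constituent term in its own variable.

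For joint convexity, the second term depends only on $\mu$ and is \emph{strictly} convex in $\mu$ since $k_\epsilon$ is positive definite. The first term is the integral of the perspective $(a,m)\mapsto a\log(a/m)$ of the convex function $s\mapsto s\log s$, and perspectives of convex functions are jointly convex on $\RR_+^2$; integrating against a common dominating measure yields joint convexity of $\langle\al,\log(\d\al/\d\mu)\rangle$ in $(\al,\mu)$, with the convention that it equals $+\infty$ when $\al\not\ll\mu$. Partial minimization of a jointly convex function being convex, this yields convexity of $\F_\epsilon$ on $\Mm_1^+(\Xx)$.

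For strict convexity, I would argue by contradiction. Suppose $\al_0\neq\al_1$ in $\Mm_1^+(\Xx)$ and $t\in(0,1)$ satisfy $\F_\epsilon(\al_t)=(1-t)\F_\epsilon(\al_0)+t\F_\epsilon(\al_1)$, where $\al_t\eqdef(1-t)\al_0+t\al_1$. Let $\mu_i^\star$ be a minimizer of $\H(\al_i,\cdot)$ for $i\in\{0,1\}$. The chain
$$(1-t)\H(\al_0,\mu_0^\star)+t\H(\al_1,\mu_1^\star)\;\geq\;\H(\al_t,(1-t)\mu_0^\star+t\mu_1^\star)\;\geq\;\min_{\mu}\H(\al_t,\mu)$$
collapses to equalities by hypothesis. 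The first equality, combined with strict convexity of $\mu\mapsto\tfrac{1}{2}\|\mu\|_{k_\epsilon}^2$, forces $\mu_0^\star=\mu_1^\star=:\mu^\star$. Once the common denominator $\mu^\star$ is fixed, the remaining equality reduces to strict convexity of $\al\mapsto\langle\al,\log(\d\al/\d\mu^\star)\rangle$ in $\al$: writing $\rho_i\eqdef\d\al_i/\d\mu^\star$, the pointwise strict convexity of $s\mapsto s\log s$ forces $\rho_0=\rho_1$ $\mu^\star$-a.e., i.e., $\al_0=\al_1$, contradicting our assumption.

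The main delicate point is not any single convexity step (each is standard) but the measure-theoretic framework: because $\mu$ is unnormalized, one must reason via the perspective of $s\mapsto s\log s$ rather than the usual Kullback--Leibler divergence between probability measures, and one must invoke existence of the minimizers $\mu_i^\star$, which is guaranteed by Proposition~\ref{prop:chgt_variable} under the stated assumptions.
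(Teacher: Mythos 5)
Your proposal is correct and mirrors the paper's own proof: both establish joint convexity of the inner functional $\E_\epsilon(\al,\mu) = \langle\al,\log\tfrac{\d\al}{\d\mu}\rangle + \tfrac{1}{2}\|\mu\|_{k_\epsilon}^2$ (the paper via the dual formulation of $\KL$ in Proposition~\ref{prop:KL_cvx}, you via the perspective of $s\mapsto s\log s$), deduce convexity of $\F_\epsilon$ by partial minimization, and then obtain strict convexity by the identical equality-case analysis — first forcing $\mu_0^\star=\mu_1^\star$ from strict convexity of $\|\cdot\|_{k_\epsilon}^2$, then $\al_0=\al_1$ from strict convexity of the relative entropy term at fixed reference measure.
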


We now define an auxiliary ``Hausdorff'' divergence
that can be interpreted as an $\OT_\epsilon$ loss with \emph{decoupled} dual potentials.

\begin{defn}[Hausdorff divergence]
Thanks to Proposition~\ref{prop:differentiability-ot},
the Sinkhorn negentropy $\F_\epsilon$
is differentiable in the sense of~\eqref{eq:gradient-Radon}.
For any probability measures $\al,\be \in \Mm_1^+(\Xx)$ and regularization strength $\epsilon > 0$, 
we can thus define 
\begin{align*}
	\H_\epsilon(\al,\be)
	&\eqdef \tfrac{1}{2}\langle\al-\be,
	\nabla \F_\epsilon(\al)-\nabla \F_\epsilon(\be)\rangle~\geqslant~0.
\end{align*}
It is the symmetric Bregman divergence induced by the strictly convex functional
$F_\epsilon$~\citep{bregman1967relaxation}
and is therefore a positive definite quantity.
\end{defn}

\subsection{Proof of the Theorem}

We are now ready to conclude.
First, remark that the dual expression~\eqref{eq-dual} of $\W_\epsilon(\al,\be)$ as a maximization of linear forms ensures that $\W_\epsilon(\al,\be)$ 
is \emph{convex} with respect to $\al$ and with respect to $\be$ 
(but \emph{not} jointly convex if $\epsilon>0$). 
$\Wb_\epsilon$ is thus convex with respect to both inputs $\al$
and $\be$ as a sum of the functions $\W_\epsilon$ and $\F_\epsilon$
-- see Proposition~\ref{prop:concavity}.

Convexity also implies that,
\begin{alignat*}{8}
		\W_\epsilon(\al,\al) &\,+\,& \dotp{\be-\al}{\nabla_2 \W_\epsilon(\al,\al)}
		&~\leqslant~& \W_\epsilon(\al,\be), 
		\\
		\W_\epsilon(\be,\be) &\,+\,& \dotp{\al-\be}{\nabla_1 \W_\epsilon(\be,\be)}
		&~\leqslant~& \W_\epsilon(\al,\be). 
\end{alignat*}
Using \eqref{ot-gradient} to get
$\nabla_2 \W_\epsilon(\al,\al)=-\nabla \F_\epsilon(\al)$,
$\nabla_1 \W_\epsilon(\be,\be)=-\nabla \F_\epsilon(\be)$ and summing the above inequalities, we show that  $\H_\epsilon \leqslant \Wb_\epsilon$,
which implies~\eqref{eq:positivity_sinkh}.

To prove~\eqref{eq:divergence_definite},
note that 
$\Wb_\epsilon(\al,\be)=0\Rightarrow\H_\epsilon(\al,\be)=0$, 
which implies that $\al=\be$ since $\F_\epsilon$ is a \emph{strictly} convex functional.

Finally, we show that $\S_\epsilon$ metrizes the convergence in
law~\eqref{eq:weak-convergence} in the Section~\ref{appendix:metrize-conv-law} of the appendix.


\section{Computational scheme}
\label{sec:practice}

We have shown that Sinkhorn divergences~\eqref{eq-sinkhorn-div}
are positive definite, convex
loss functions on the space of probability measures.
Let us now detail their \emph{implementation}
on modern hardware.

\paragraph{Encoding measures.}

For the sake of simplicity,
we focus on discrete, \emph{sampled} measures
on a Euclidean feature space $\Xx\subset\R^\D$.
Our input measures $\al$ and $\be\in\Mm_1^+(\Xx)$
are represented as sums of weighted Dirac atoms
\begin{align}
\al&=\sum_{i=1}^\N \AL_i\,\delta_{\X_i},
&
\be&=\sum_{j=1}^\M \BE_j\,\delta_{\Y_j}\label{eq:discrete-measures}
\end{align}
and encoded as two pairs $(\AL,\X)$
and $(\BE,\Y)$ of float arrays.
Here, $\AL\in\R^\N_+$ and 
$\BE\in\R^\M_+$
are \emph{non-negative} vectors
of shapes $[\N]$ and $[\M]$
that sum up to~1,
whereas $\X \in (\R^\D)^\N$ and 
$\Y \in (\R^\D)^\M$ are real-valued
tensors of shapes $[\N,\D]$ and $[\M,\D]$
-- if we follow python's convention.

\subsection{The Sinkhorn algorithm(s)}

\paragraph{Working with dual vectors.}
Proposition~\ref{prop:dual-pot-properties} is key to the modern theory
of regularized Optimal Transport:
it allows us to compute the $\OT_\epsilon$ cost --
and thus the Sinkhorn divergence $\S_\epsilon$,
thanks to \eqref{eq-sinkhorn-div} --
using dual variables that have the same \emph{memory footprint}
as the input measures:
solving~\eqref{eq-dual} in our discrete setting, we only need to store
the \emph{sampled values} of the dual potentials $\f$ and $\g$
on the measures' supports.

We can thus work with \emph{dual vectors}
$\B\in\R^\N$ and $\A \in \R^\M$, defined through
$\B_i = \b(\X_i)$ and $\A_j = \a(\Y_j)$,
which encode an \emph{implicit} transport plan $\pi$
from $\al$ to $\be$~\eqref{eq-primal-dual}.
Crucially, the optimality condition
\eqref{eq:optim-conditions-dual}
now reads: \\[.2cm]
$ \forall\,i\in[1,\N],\,  \forall\,j\in[1,\M],$
\begin{align}\hspace*{-.3cm}
\B_i&= -\epsilon\LSE_{k=1}^{\M}\big( \log(\BE_k)+\tfrac{1}{\epsilon}\A_k - \tfrac{1}{\epsilon}\C(\X_i,\Y_k) \big)\label{eq:optimality_lse_discrete_b}\\
\hspace*{-.3cm}
\A_j&= -\epsilon\LSE_{k=1}^{\N}\big( \log(\AL_k)+\tfrac{1}{\epsilon}\B_k - \tfrac{1}{\epsilon}\C(\X_k,\Y_j) \big)\label{eq:optimality_lse_discrete_a} \\
&\text{where}~~~~~\LSE_{k=1}^{\N}(V_k) ~=~ \log \sum_{k=1}^\N \exp(V_k)\label{eq:logsumexp}
\end{align}
denotes a (stabilized) log-sum-exp reduction.

If $(\B,\A)$ is an optimal pair of dual vectors
that satisfies Equations (\ref{eq:optimality_lse_discrete_b}-\ref{eq:optimality_lse_discrete_a}), we deduce from
\eqref{eq:strong_duality} that
\begin{align}\label{eq:strong_duality_discret}
\OT_\epsilon(\AL_i,\X_i,\BE_j,\Y_j)
=
\sum_{i=1}^\N \AL_i \B_i+\sum_{j=1}^\M\BE_j\A_j.
\end{align}
But how can we solve this \emph{coupled} system of equations
given $\AL$, $\X$, $\BE$ and $\Y$ as input data?

\paragraph{The Sinkhorn algorithm.}
One simple answer:
by enforcing \eqref{eq:optimality_lse_discrete_b}
and \eqref{eq:optimality_lse_discrete_a} alternatively,
updating the vectors $\B$ and $\A$ until convergence~\citep{CuturiSinkhorn}.
Starting from null potentials $\B_i=0=\A_j$,
this numerical scheme is nothing but
a block-coordinate ascent on the dual problem~\eqref{eq-dual}.
One step after another, we are enforcing 
null derivatives on the dual cost with respect to
the $\B_i$'s and the $\A_j$'s.

\paragraph{Convergence.}
The ``Sinkhorn loop''
converges quickly towards its unique optimal value:
it enjoys a linear convergence rate~\citep{peyre2017computational}
that can be improved with some heuristics~\citep{thibault2017overrelaxed}.
When computed through the \emph{dual} expression
\eqref{eq:strong_duality_discret},
$\OT_\epsilon$ and its gradients (\ref{eq:gradient_ai}-\ref{eq:gradient_xi})
are \emph{robust}
to small perturbations of the values of $\B$ and $\A$:
monitoring convergence
through the $\text{\upshape{L}}^1$ norm of the updates
on $\B$ and breaking the loop as we reach a set tolerance level
is thus a sensible stopping criterion.
In practice, if $\epsilon$ is large enough -- say, $\epsilon\geqslant\texttt{.05}$
on the unit square with an Earth Mover's cost
$\C(x,y)=\|x-y\|$ -- waiting for 10 or 20 iterations
is more than enough.

\paragraph{Symmetric $\OT_\epsilon$ problems.}
All in all, the baseline Sinkhorn loop provides
an efficient way of solving the discrete problem
$\OT_\epsilon(\al,\be)$ for generic input measures.
But in the specific case of the (symmetric) corrective terms
$\OT_\epsilon(\al,\al)$ and $\OT_\epsilon(\be,\be)$
introduced in~\eqref{eq-sinkhorn-div}, we can do better.

The key here is to remark that if $\al=\be$,
the dual problem~\eqref{eq-dual} becomes
a concave maximization problem that is \emph{symmetric}
with respect to its two variables $\f$ and $\g$.
Hence, there exists a (unique) optimal dual
pair $(\f, \g=\f)$ on the diagonal
which  is characterized in the discrete setting
by the symmetric optimality condition:\\[.2cm]
$\forall i\in[1,\N],$
\begin{align}
\B_i= -\epsilon\LSE_{k=1}^\N\big[
\log(\AL_k) +\tfrac{1}{\epsilon}\B_k
-\tfrac{1}{\epsilon}\C(\X_i,\X_k)
\big].\label{eq:symmetric_update_naive}
\end{align}
Fortunately, given $\AL$ and $\X$,
the optimal vector $\B$ that solves this equation
can be computed by iterating a \emph{well-conditioned} 
fixed-point update:
\begin{align}
\B_i\gets \tfrac{1}{2}\big(
\B_i -\epsilon\LSE_{k=1}^\N\big[
\log(\AL_k) +\tfrac{1}{\epsilon}\B_k
-\tfrac{1}{\epsilon}\C(\X_i,\X_k)
 \big]\,\big).\label{eq:symmetric_update}
\end{align}
This symmetric variant of the Sinkhorn algorithm 
can be shown to converge much
faster than the standard loop applied to a pair
$(\al,\be=\al)$ on the diagonal, and three iterations are
usually enough to compute accurately the optimal dual vector.

\subsection{Computing the Sinkhorn divergence and its gradients}

Given two pairs $(\AL,\X)$ and $(\BE,\Y)$ of float arrays
that encode the probability measures $\al$ and
$\be$~\eqref{eq:discrete-measures},
we can now implement the Sinkhorn divergence $\S_\epsilon(\al,\be)$:\\
The \emph{cross-correlation} dual vectors
$\B\in\R^\N$ and $\A\in\R^\M$ associated to the discrete
problem $\OT_\epsilon(\al,\be)$ can be computed using
the Sinkhorn iterations
(\ref{eq:optimality_lse_discrete_b}-\ref{eq:optimality_lse_discrete_a}).\\
The \emph{autocorrelation} dual vectors
$\P\in\R^\N$ and $\Q\in\R^\M$,
respectively associated to the symmetric problems
$\OT_\epsilon(\al,\al)$ and $\OT_\epsilon(\be,\be)$,
can be computed using the \emph{symmetric}
Sinkhorn update \eqref{eq:symmetric_update}.
\\
The Sinkhorn \emph{loss} can be computed using
\eqref{eq-sinkhorn-div} and \eqref{eq:strong_duality_discret}:
\begin{align*}
\S_\epsilon(\AL_i,\X_i,\BE_j,\Y_j)
=
\sum_{i=1}^\N \AL_i (\B_i-\P_i )+\sum_{j=1}^\M\BE_j(\A_j-\Q_j ).
\end{align*}

\paragraph{What about the gradients?}
In this day and age, we could be tempted to rely
on the \emph{automatic differentiation} engines
provided by modern libraries,
which let us differentiate the result of 
twenty or so Sinkhorn iterations
as a mere composition of elementary operations~\citep{genevay2018learning}.
But beware: this loop has a lot more
\emph{structure} than a generic feed forward network.
Taking advantage of it is key to a x2-x3 gain in performances,
as we now describe.

Crucially, we must remember that the Sinkhorn loop 
is a \emph{fixed point}
iterative solver: at convergence, its solution
satisfies an equation given by the implicit function theorem.
Thanks to~\eqref{ot-gradient}, 
using the very definition of gradients in the space
of probability measures~\eqref{eq:gradient-Radon}
and the intermediate variables in the computation
of $\S_\epsilon(\al,\be)$,
we get that
\begin{align}
\partial_{\AL_i}\SS_\epsilon(\AL_i,\X_i,\BE_j,\Y_j)
~&=~\B_i-\P_i \label{eq:gradient_ai} \\
\text{and}~~~~~~\partial_{\X_i}\SS_\epsilon(\AL_i,\X_i,\BE_j,\Y_j)
~&=~\nabla \phi(\X_i),
\label{eq:gradient_xi}
\end{align}
where $\phi:\Xx\rightarrow \R$ is
equal to $\B_i-\P_i$ on the $\X_i$'s and is defined through
\begin{align*}
\phi(x)
~=~&-\epsilon\,\log\sum_{j=1}^\M\exp\big[\,
\log(\BE_j)+\tfrac{1}{\epsilon}\A_j-\tfrac{1}{\epsilon}\C(x,\Y_j)
\,\big]\\
&+\epsilon\,\log\sum_{i=1}^\N\exp\big[\,
\log(\AL_i)+\tfrac{1}{\epsilon}\P_i-\tfrac{1}{\epsilon}\C(x,\X_i)
\,\big].
\end{align*}

\begin{figure*}[t!]
    \centering
    \makebox[\textwidth][c]{
        \mbox{
\pgfplotsset{grid style={gray!25}}
\begin{minipage}{.5\textwidth}
    \centering
    \resizebox {.85\columnwidth} {!} {
    \begin{tikzpicture}
      \begin{axis}[scale=.9,grid=both,ymin=1e-4, ymax=300, xmin=1e2, xmax=1e6, 
                   title={Computing an Energy Distance + gradient between samples of size $\N=\M$},
                   xlabel={Number of points $\N$}, 
                   ylabel=Time (sec), 
                   xmode=log, ymode=log,
                   legend pos = south east,
                   x post scale=1.7,
                   grid=major,
                   legend cell align={left}]
        \addplot[green!50!black, thick,mark=*] table[x=Npoints,y=pytorch_cpu]  {sections/images/benchmark_energy_distance.csv} node[right]{\footnotesize \oma{\textbf{~out of mem}}};
        \addplot[blue!50, thick,mark=*] table[x=Npoints,y=pytorch_gpu]  {sections/images/benchmark_energy_distance.csv} node[right]{\footnotesize \omb{\textbf{~out of mem}}};
        \addplot[red!80, thick,mark=*] table[x=Npoints,y=GPU_1D]  {sections/images/benchmark_energy_distance.csv};
        \addlegendentry{\texttt{PyTorch} on CPU}
        \addlegendentry{\texttt{PyTorch} on GPU}
        \addlegendentry{\texttt{PyTorch + KeOps}}
      \end{axis}
    \end{tikzpicture}
    }
\end{minipage}
\begin{minipage}{.5\textwidth}
    \centering
    \resizebox {.85\columnwidth} {!} {
    \begin{tikzpicture}
      \begin{axis}[scale=.9,grid=both,ymin=1e-4, ymax=300, xmin=1e2, xmax=1e6, 
                   title={Computing $\log\sum_j\exp\|x_i-y_j\|$ with samples of size $\N=\M$},
                   xlabel={Number of points $\N$}, 
                   ylabel=Time (sec), 
                   xmode=log, ymode=log,
                   legend pos = south east,
                  x post scale=1.7,
                   grid=major,
                   legend cell align={left}]
        \addplot[green!50!black, thick,mark=*] table[x=Npoints,y=pytorch_cpu]  {sections/images/benchmark_LogSumExp.csv} node[right]{\footnotesize \oma{\textbf{~out of mem}}};
        \addplot[blue!50, thick,mark=*] table[x=Npoints,y=pytorch_gpu]  {sections/images/benchmark_LogSumExp.csv} node[right]{\footnotesize \omb{\textbf{~out of mem}}};
        \addplot[red!80, thick,mark=*] table[x=Npoints,y=GPU_1D]  {sections/images/benchmark_LogSumExp.csv};
        \addlegendentry{\texttt{PyTorch} on CPU}
        \addlegendentry{\texttt{PyTorch} on GPU}
       \addlegendentry{\texttt{PyTorch + KeOps}}
      \end{axis}
    \end{tikzpicture}
    }
\end{minipage}
        }
    }
\vspace{-.4cm}
\caption{\textbf{The KeOps library allows us to break the memory bottleneck.}
    Using CUDA routines that sum kernel values
    without storing them in memory,
    we can outperform baseline, tensorized,
    implementations of the energy distance.
    Experiments performed on $\Xx=\R^3$
    with a cheap laptop's GPU (GTX 960M).\label{fig:bench-keops}}
    
    \vspace{.2cm}

    \makebox[\textwidth][c]{
        \mbox{
\pgfplotsset{grid style={gray!25}}
\begin{minipage}{.5\textwidth}
    \centering
    \resizebox {.85\columnwidth} {!} {
    \begin{tikzpicture}
      \begin{axis}[scale=.9,grid=both,ymin=1e-4, ymax=300, xmin=1e2, xmax=1e6, 
                   title={On a cheap laptop's GPU (GTX960M)},
                   xlabel={Number of points $\N$}, 
                   ylabel=Time (sec), 
                   xmode=log, ymode=log,
                   legend pos = north west,
                   x post scale=1.7,
                   legend cell align={left},
                   grid=major,
                   ]
        \addplot[green!50!black, thick,mark=*] table[x=Npoints,y=sinkhorn_nocv]  {sections/images/benchmark_fidelities_GTX960M.csv};
        \addplot[blue!50, thick,mark=*] table[x=Npoints,y=sinkhorn]  {sections/images/benchmark_fidelities_GTX960M.csv};
        \addplot[red!80, thick,mark=*] table[x=Npoints,y=energy_distance]  {sections/images/benchmark_fidelities_GTX960M.csv};
        \addlegendentry{Sinkhorn divergence (naive)}
        \addlegendentry{Sinkhorn divergence}
        \addlegendentry{Energy Distance}
      \end{axis}
    \end{tikzpicture}
    }
\end{minipage}
\begin{minipage}{.5\textwidth}
    \centering
    \resizebox {.85\columnwidth} {!} {
    \begin{tikzpicture}
      \begin{axis}[scale=.9,grid=both,ymin=1e-4, ymax=300, xmin=1e2, xmax=1e6, 
                   title={On a high end GPU (P100)},
                   xlabel={Number of points $\N$}, 
                   ylabel=Time (sec), 
                   xmode=log, ymode=log,
                   legend pos = north west,
                   x post scale=1.7,
                   legend cell align={left},
                   grid=major,
                   ]
        \addplot[green!50!black, thick,mark=*] table[x=Npoints,y=sinkhorn_nocv]  {sections/images/benchmark_fidelities_P100.csv};
        \addplot[blue!50, thick,mark=*] table[x=Npoints,y=sinkhorn]  {sections/images/benchmark_fidelities_P100.csv};
        \addplot[red!80, thick,mark=*] table[x=Npoints,y=energy_distance]  {sections/images/benchmark_fidelities_P100.csv};
        \addlegendentry{Sinkhorn divergence (naive)}
        \addlegendentry{Sinkhorn divergence }
        \addlegendentry{Energy Distance}
      \end{axis}
    \end{tikzpicture}
    }
\end{minipage}
        }
    }
\vspace{-.4cm}
\caption{\textbf{Sinkhorn divergences scale up to finely sampled distributions.}\label{fig:bench_divergences}
    As a rule of thumb, Sinkhorn divergences
    take 20-50 times as long to compute as a baseline MMD
    -- even though the explicit gradient formula
    (\ref{eq:gradient_ai}-\ref{eq:gradient_xi})
    lets us win a factor 2-3
    compared with a \emph{naive} autograd implementation.
    For the sake of this benchmark,
    we ran the Sinkhorn and symmetric Sinkhorn loops
    with fixed numbers of iterations: 20 and 3
    respectively, which is more than enough for
    measures on the unit (hyper)cube if $\epsilon \geqslant \texttt{.05}$
    -- here, we work in $\R^3$.}    
    
\end{figure*}

\paragraph{Graph surgery with PyTorch.}
Assuming convergence in the Sinkhorn loops, 
it is thus possible to compute
the gradients of $\S_\epsilon$ 
\emph{without having to backprop} through
the twenty or so iterations of the Sinkhorn algorithm:
we only have to differentiate the expression above
with respect to $x$.
But does it mean that we should differentiate
$\C$ or the log-sum-exp operation by hand?
Fortunately, no!

Modern libraries such as PyTorch~\citep{pytorch}
are flexible enough to let us ``hack'' the
naive \texttt{autograd} algorithm, and act
as though the optimal dual vectors
$\B_i$, $\P_i$, $\A_j$ and $\Q_j$
did not depend on the input variables of $\S_\epsilon$.
As documented in our reference code,
\begin{center}
\texttt{github.com/jeanfeydy/global-divergences},
\end{center}
an appropriate use of the \texttt{.detach()}
method in PyTorch is enough to get the best
of both worlds:
an \emph{automatic} differentiation engine
that computes our gradients using the formula \emph{at convergence}
instead of the baseline backpropagation algorithm.
All in all, as evidenced by the benchmarks
provided Figure~\ref{fig:bench_divergences},
this trick allows us to divide by a factor 2-3
the time needed to compute a Sinkhorn divergence and its gradient
with respect to the $\X_i$'s.

\subsection{Scaling up to large datasets}
The Sinkhorn iterations rely on a single non-trivial operation:
the log-sum-exp reduction~\eqref{eq:logsumexp}.
In the ML literature, this \emph{SoftMax} operator
is often understood as a row- or column-wise
reduction that acts on $[\N,\M]$ matrices.
But as we strive to implement the update rules
(\ref{eq:optimality_lse_discrete_b}-\ref{eq:optimality_lse_discrete_a})
and \eqref{eq:symmetric_update} on the GPU,
we can go further.

\paragraph{Batch computation.}
First, if the number of samples $\N$ and $\M$
in both measures is small enough, we can optimize
the GPU usage by computing Sinkhorn divergences \emph{by batches}
of size $\Batch$.
In practice, this can be achieved by encoding the cost function $\C$ as a 3D tensor
of size $[\Batch,\N,\M]$ made up of stacked matrices $(\C(\X_i,\Y_j))_{i,j}$,
while $\B$ and $\A$ become $[\Batch,\N]$ and $[\Batch,\M]$
tensors, respectively.
Thanks to the broadcasting syntax supported by modern
libraries, we can then seamlessly compute, in parallel,
loss values $\S_\epsilon(\al_k,\be_k)$ for
$k$ in $[1,\Batch]$.

\paragraph{The KeOps library.}
Unfortunately though, tensor-centric methods such as the one presented above
cannot scale to measures sampled with large numbers $\N$ and $\M$
of Dirac atoms: as these numbers exceed 10,000, huge $[\N,\M]$ matrices stop fitting into GPU memories.
To alleviate this problem, we leveraged the KeOps library~\citep{keops}
that provides \emph{online} map-reduce routines on the GPU with full 
PyTorch integration.
Performing online log-sum-exp reductions
with a \emph{running maximum},
the KeOps primitives allow us to compute Sinkhorn
divergences with a \emph{linear} memory footprint.
As evidenced by the benchmarks of Figures~\ref{fig:bench-keops}-\ref{fig:bench_divergences},
computing the gradient of a Sinkhorn loss with 100,000
samples per measure is then a matter of seconds.


\section{Numerical illustration}
\label{sec:flow}

In the previous sections,
we have provided theoretical guarantees
on top of a comprehensive implementation guide for
the family of \emph{Sinkhorn divergences} $\S_\epsilon$.
Let us now describe the \emph{geometry} induced by 
these new loss functions on the space of probability measures.

\paragraph{Gradient flows.}
To compare MMD losses $\L_k$ with Cuturi's original cost $\OT_\epsilon$
and the de-biased Sinkhorn divergence $\S_\epsilon$, 
a simple yet relevant experiment is to let
a \emph{model} distribution $\al(t)$ flow with time $t$ along
the ``Wasserstein-2'' gradient flow of a loss
functional $\al\mapsto \L(\al,\be)$
that drives it towards a target distribution
$\be$~\citep{santambrogio2015optimal}.
This corresponds to the ``non-parametric'' version of the data fitting problem evoked in Section~\ref{sec-intro}, where the parameter $\th$ is nothing but the vector of positions $\X$ that encodes the support of a measure
$\al = \tfrac{1}{\N}\sum_{i=1}^N \delta_{\X_i}$.
Understood as a ``model free'' idealization of fitting problems in machine learning,
this experiment allows us to grasp the typical behavior of the loss function
as we discover the deformations of the support that it favors.

\newcommand{\sidecapY}[1]{ \begin{sideways}\parbox{.14\linewidth}{\centering #1}\end{sideways} }
\newcommand{\FigGF}[1]{\includegraphics[width=.17\linewidth]{sections/images/grad-flow-1d/#1}}
\newcommand{\FigRowGF}[1]{\FigGF{init} & \FigGF{#1-1} & \FigGF{#1-2} & \FigGF{#1-3} & \FigGF{#1-4}}
\begin{figure}[H]
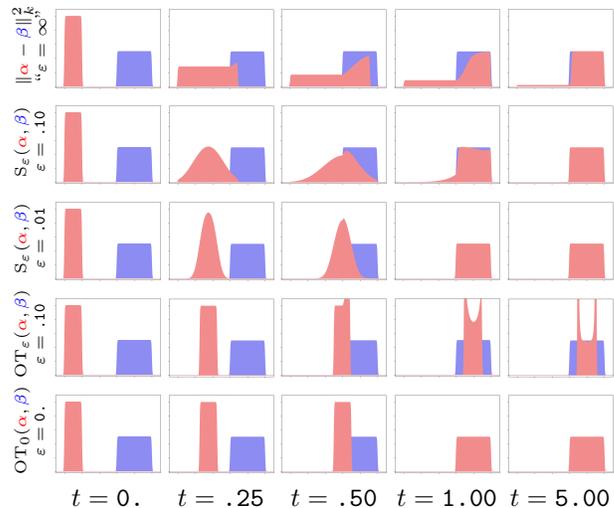

	\centering
	\begin{tabular}{@{}c@{}c@{\hspace{1mm}}c@{\hspace{1mm}}c@{\hspace{1mm}}c@{\hspace{1mm}}c@{}}
	 \sidecapY{ \tiny $\norm{{\color{red}\al}-{\color{blue}\be}}_{k}^2$
	 ``$\epsilon=\infty$'' }  & \FigRowGF{ed} \\
	 \sidecapY{ \tiny $\S_\epsilon({\color{red}\al},{\color{blue}\be})$
	 $\epsilon=\texttt{.10}$ }  & \FigRowGF{sinkh} \\
	 \sidecapY{ \tiny $\S_\epsilon({\color{red}\al},{\color{blue}\be})$
	 $\epsilon=\texttt{.01}$ }  & \FigRowGF{sinkhsmall} \\
	 \sidecapY{ \tiny $\OT_\epsilon({\color{red}\al},{\color{blue}\be})$
	 $\epsilon=\texttt{.10}$ }   & \FigRowGF{ot} \\
	 \sidecapY{ \tiny  $\OT_0({\color{red}\al},{\color{blue}\be})$
	 $\epsilon=\texttt{0.}$ }  & \FigRowGF{ot0} \\
	 & $t=\texttt{0.}$ & $t=\texttt{.25}$ & $t=\texttt{.50}$ & $t=\texttt{1.00}$ & $t=\texttt{5.00}$  
	\end{tabular}
	\caption{Gradient flows for 1-D measures sampled with $\N=\M=5000$ points
	-- we display $\al(t)$ (in red) and $\be$ (in blue) through kernel density estimations on the segment $[0,1]$.
	The legend on the left indicates the function that is minimized
	with respect to $\al$. 
	Here $k(x,y)=-\norm{x-y}$, $\C(x,y)=\norm{x-y}$ and $\epsilon=\texttt{.10}$ on the second and fourth lines,
	$\epsilon=\texttt{.01}$ on the third.
	\\
	In 1D, the optimal transport problem can be solved using
	a sort algorithm: for the sake of comparison,
	we can thus display the ``true'' dynamics of the
	Earth Mover's Distance in the fifth line.
	\label{fig:flow1d}\vspace{-\baselineskip}}
\end{figure}

\renewcommand{\FigGF}[1]{\includegraphics[width=.18\linewidth]{sections/images/grad-flow-2d/#1}}
\renewcommand{\FigRowGF}[1]{\FigGF{#1-0} & \FigGF{#1-1} & \FigGF{#1-2} & \FigGF{#1-3} & \FigGF{#1-4}}

\begin{figure*}[t]
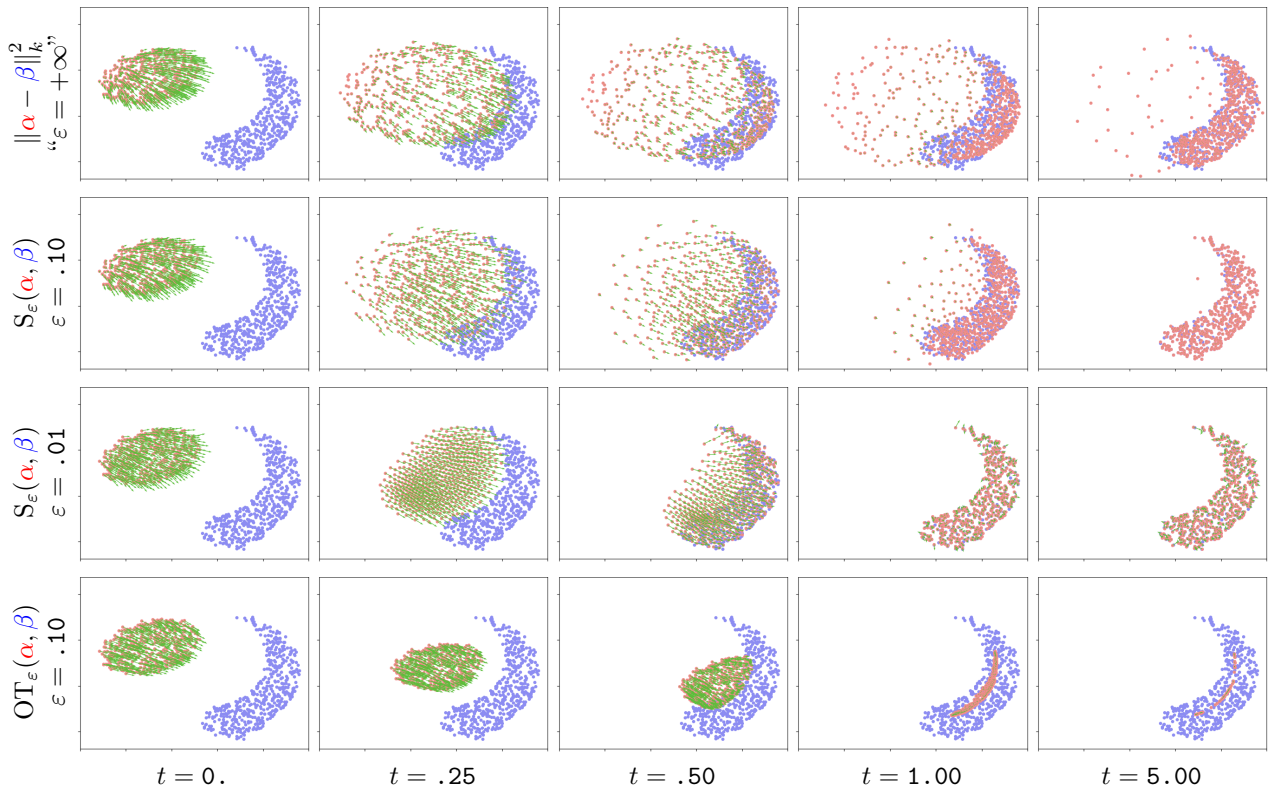

	\centering
	\begin{tabular}{@{}c@{}c@{\hspace{1mm}}c@{\hspace{1mm}}c@{\hspace{1mm}}c@{\hspace{1mm}}c@{}}
	 \sidecapY{  $\norm{{\color{red}\al}-{\color{blue}\be}}_{k}^2$
	 ``$\epsilon=+\infty$''}  & \FigRowGF{ed} \\
	 \sidecapY{  $\S_\epsilon({\color{red}\al},{\color{blue}\be})$
	 $\epsilon=\texttt{.10}$ } & \FigRowGF{sinkh} \\
	 \sidecapY{  $\S_\epsilon({\color{red}\al},{\color{blue}\be})$
	 $\epsilon=\texttt{.01}$} & \FigRowGF{sinkhsmall} \\
	 \sidecapY{ $\OT_\epsilon({\color{red}\al},{\color{blue}\be})$
	 $\epsilon=\texttt{.10}$ }  & \FigRowGF{ote} \\
	 & $t=\texttt{0.}$ & $t=\texttt{.25}$ & $t=\texttt{.50}$ & $t=\texttt{1.00}$ & $t=\texttt{5.00}$  
	\end{tabular}
	\caption{\textbf{Gradient flows for 2-D measures.} The setting is the same as in Figure~\ref{fig:flow1d}, but the measures $\al(t)$ (in red) and $\be$ (in blue) are now directly displayed as point clouds of $\N=\M=500$ points. The evolution of the support of $\al(t)$ is thus made apparent,
	and we display as a green vector field the descent direction $-\nabla_{\X_i}\L(\al,\be)$.
	\label{fig:flow2d}}
	\vspace{-\baselineskip}
\end{figure*}

In Figures~\ref{fig:flow1d} and~\ref{fig:flow2d},
$\beta=\tfrac{1}{\M}\sum_{j=1}^\M\delta_{\Y_j}$ is a fixed
target measure while 
$\al = \tfrac{1}{\N}\sum_{i=1}^N \delta_{\X_i(t)}$
is parameterized by a time-varying point cloud
$\X(t)=(\X_i(t))_{i=1}^\N \in (\RR^\D)^\N$ 
in dimension~$\D=1$ or $2$. 
Starting from a set initial condition at time $t=0$,
we simply integrate the ODE
\eq{
	\dot \X(t) = -\N\,\nabla_{\X} \big[ \,\L\big(\tfrac{1}{\N}{\scriptstyle \sum_{i=1}^N }\delta_{\X}, \be \big) \,\big] (\X_i(t))
}
with a Euler scheme and display the
evolution of $\al(t)$ up to time $t=5$.

\paragraph{Interpretation.}
In both figures, the fourth line highlights the 
entropic bias that is present in the $\OT_\epsilon$ loss:
$\al(t)$ is driven towards a minimizer that is a ``shrunk'' 
version of $\be$. 
As showed in Theorem~\ref{thm-global-positivity},
the de-biased loss $\S_\epsilon$ does not suffer from this issue: just like MMD norms, it can be used as a \emph{reliable},
positive-definite divergence.  

Going further, 
the dynamics induced by the Sinkhorn divergence
interpolates between that of an MMD ($\epsilon=+\infty$)
and Optimal Transport ($\epsilon=0$), 
as shown in~\eqref{eq:sinkhorn-interpolation}.
Here, $\C(x,y)=\|x-y\|$ and we can indeed remark that 
the second and third lines bridge the gap
between the flow of the energy distance $\L_{-\|\cdot\|}$
(in the first line) and that
of the Earth Mover's cost $\OT_0$
which moves particles according
to an optimal transport plan.

Please note that in both experiments,
the gradient of the energy distance with respect to the
$\X_i$'s vanishes at the extreme points of $\al$'s support.
Crucially, for small enough values of $\epsilon$,
$\S_\epsilon$ recovers the translation-aware
geometry of OT and we observe a \emph{clean} convergence of $\al(t)$ to $\be$
as no sample lags behind.

\section{Conclusion}


Recently introduced in the ML literature,
the Sinkhorn divergences were designed to
interpolate between MMD and OT.
We have now shown that they also come with
a bunch of desirable properties: 
positivity, convexity, metrization of the convergence in law
and scalability to large datasets.

To the best of our knowledge, it is the first time
that a loss derived from the theory of entropic
Optimal Transport is shown to stand on such a firm ground.
As the foundations of this theory are progressively 
being settled, we now hope that researchers will be free to
focus on one of the major open problems in the field:
the interaction of \emph{geometric} loss functions
with concrete machine learning models.

\bibliographystyle{apalike}
\bibliography{biblio}

\newpage
\appendix

~\newpage

\section{Standard results}

Before detailing our proofs,
we first recall some well-known results regarding the
Kullback-Leibler divergence and the SoftMin operator defined 
in~\eqref{eq:defn-softmin}.

\subsection{The Kullback-Leibler divergence}

\paragraph{First properties.}
For any pair of Radon measures $\al,\be\in\Mm^+(\Xx)$ on the compact
metric set $(\Xx,\d)$, the Kullback-Leibler divergence is defined through
\begin{align*}
\KL(\al,\be)~\eqdef~
\begin{cases}
\dotp{\al}{\log\tfrac{\d\al}{\d\be} - 1} + \dotp{\be}{1} &\text{if $\al \ll \be$}\\
~~~~+\infty &\text{otherwise.}
\end{cases}
\end{align*}

It can be rewritten as an $f$-divergence
associated to 
\begin{align*}
\psi : x \in\R_{\geqslant 0}\mapsto x\,\log(x) - x + 1 \,\in \R_{\geqslant 0},
\end{align*}
with $0\cdot\log(0)=0$, as 
\begin{align}
\KL(\al,\be)~=~
\begin{cases}
\dotp{\be}{\psi(\tfrac{\d\al}{\d\be})} &\text{if $\al \ll \be$}\\
~~~~+\infty &\text{otherwise.}
\end{cases}\label{eq:KL_fdivergence}
\end{align}
Since $\psi$ is a strictly convex function
with a unique global minimum at $\psi(1)=0$,
we thus get that $\KL(\al,\be)\geqslant 0$
with equality iff. $\al=\be$.

\paragraph{Dual formulation.}
The convex conjugate of $\psi$
is defined for $u\in\R$ by
\begin{align}
\psi^*(u)~&\eqdef~ \sup_{x>0}\,(x u - \psi(x)) \nonumber \\
&=~~~~~ e^u - 1, \nonumber \\
\text{and we have}~~~~&\psi(x) + \psi^*(u)~\geqslant~ xu \label{eq:fenchel_psi}
\end{align}
for all $(x,u)\in \R_{\geqslant 0}\times \R$, with equality
if $x>0$ and $u=\log(x)$.
This allows us to rewrite the Kullback-Leibler
divergence as the solution of a dual concave problem:

\begin{prop}[Dual formulation of $\KL$]
Under the assumptions above,
\begin{align}
\KL(\al,\be)~=~\sup_{h\in\Ff_b(\Xx,\R)}~~ \dotp{\al}{h}-\dotp{\be}{e^h-1} \label{eq:KL_dual_mesurable} 
\end{align}
where $\Ff_b(\Xx,\R)$ is the space of bounded measurable functions from $\Xx$ to $\R$.
\end{prop}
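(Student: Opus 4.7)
The plan is to exploit the $f$-divergence representation \eqref{eq:KL_fdivergence} together with the Fenchel--Young inequality \eqref{eq:fenchel_psi}, treating the absolutely continuous and singular cases of $\al$ with respect to $\be$ separately. The easy direction $\sup \leqslant \KL$ comes directly from Fenchel--Young, while the converse requires constructing near-optimal test functions by truncation in the absolutely continuous case and by indicator functions in the singular case.

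For $\sup \leqslant \KL$, assume first $\al \ll \be$ with density $\rho = \tfrac{\d\al}{\d\be}$. Applying \eqref{eq:fenchel_psi} pointwise with $x = \rho(y)$ and $u = h(y)$ yields $\rho\, h - (e^h - 1) \leqslant \psi(\rho)$, and integrating against $\be$ (using $\int h\rho\, \d\be = \dotp{\al}{h}$) gives $\dotp{\al}{h} - \dotp{\be}{e^h - 1} \leqslant \KL(\al,\be)$ for every $h \in \Ff_b(\Xx,\R)$. When $\al \not\ll \be$ the bound is trivial since $\KL(\al,\be) = +\infty$.

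For the converse, when $\al \ll \be$ the Fenchel equality case $u = \log x$ motivates the choice $h = \log \rho$. Since this may fail to be bounded, I would use the truncation $h_n \eqdef \max(-n, \min(n, \log \rho))$, extended by $h_n \eqdef -n$ on $\{\rho = 0\}$. A direct case analysis on the three regions $\{\rho < e^{-n}\}$, $\{e^{-n} \leqslant \rho \leqslant e^n\}$, $\{\rho > e^n\}$ shows that for $n \geqslant 1$ the integrand $\rho h_n - (e^{h_n} - 1)$ is nonnegative, nondecreasing in $n$, and converges pointwise to $\psi(\rho)$; the monotone convergence theorem then yields $\dotp{\al}{h_n} - \dotp{\be}{e^{h_n} - 1} \to \KL(\al,\be)$, covering uniformly both the finite and infinite cases of $\KL$. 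In the singular case $\al \not\ll \be$, the Lebesgue decomposition provides a measurable set $A$ with $\be(A) = 0$ and $\al(A) > 0$; testing with $h = t\,\mathbf{1}_A \in \Ff_b(\Xx,\R)$ gives $\dotp{\al}{h} - \dotp{\be}{e^h - 1} = t\,\al(A) - (e^t - 1)\be(A) = t\,\al(A) \to +\infty$ as $t \to +\infty$, matching $\KL(\al,\be) = +\infty$.

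The main obstacle is the approximation step in the absolutely continuous case: although the pointwise Fenchel equality is transparent, making it rigorous requires a truncation that is simultaneously bounded, preserves monotonicity, and saturates the Fenchel inequality in the limit across the full range of $\rho$ (including $\{\rho = 0\}$ and $\{\rho = \infty\}$). In particular one must handle the case $\KL(\al,\be) = +\infty$ where no a priori integrability of $\log \rho$ is available, which is precisely why the monotonicity observation (allowing MCT rather than DCT) is the pivotal technical step.
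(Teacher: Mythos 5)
Your proof is correct and follows essentially the same strategy as the paper: Fenchel--Young gives the upper bound $\sup \leqslant \KL$, the indicator test $h = t\,\mathbf{1}_A$ handles the singular case, and a truncation of $\log\rho$ handles the absolutely continuous case. The one noteworthy deviation is the truncation itself: the paper restricts $\log\rho$ by setting $h_n = \log\rho\,\mathbf{1}_{1/n \leqslant \rho \leqslant n}$ and invokes both the monotone and dominated convergence theorems, whereas your clipping $h_n = \max(-n,\min(n,\log\rho))$ (with $h_n = -n$ on $\{\rho = 0\}$) makes the integrand $\rho h_n - (e^{h_n}-1)$ nonnegative and nondecreasing in $n$, so monotone convergence alone suffices. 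This choice is in fact the cleaner one, since it also recovers pointwise the contribution $\psi(0) = 1$ on the $\be$-non-null set $\{\rho = 0\}$, on which the paper's $h_n$ vanishes identically; your monotonicity argument is the pivotal simplification and is correctly justified by the unimodality of $u \mapsto \rho u - e^u + 1$ about $u = \log\rho$.
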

\begin{proof}
\emph{Lower bound on the sup.} If $\al$ is not absolutely
continuous with respect to $\be$, there exists a Borel
set $A$ such that $\al(A) > 0$ and $\be(A)=0$.
Consequently, for $h = \lambda\,\textbf{1}_A$,
\begin{align*}
\dotp{\al}{h}-\dotp{\be}{e^h-1}~=~\lambda\,\al(A)~\xrightarrow{\lambda\rightarrow+\infty}+\infty.
\end{align*}
Otherwise, if $\al\ll\be$, we define
$h_* = \log \tfrac{\d\al}{\d\be}$ and see that
\begin{align*}
\dotp{\al}{h_*}-\dotp{\be}{e^{h_*}-1}~=~
\KL(\al,\be).
\end{align*}
If $h_n = \log (\tfrac{\d\al}{\d\be})\,\textbf{1}_{1/n\leqslant \d\al/\d\be\leqslant n}\in\Ff_b(\Xx,\R)$, the monotone and dominated convergence theorems
then allow us to show that
\begin{align*}
\dotp{\al}{h_n} - \dotp{\be}{e^{h_n}-1} \xrightarrow{n\rightarrow+\infty} 
\KL(\al,\be).
\end{align*}

\emph{Upper bound on the sup.}
If $h\in\Ff_b(\Xx,\R)$ and $\al \ll \be$, combining \eqref{eq:KL_fdivergence}
and \eqref{eq:fenchel_psi}
allow us to show that
\begin{align*}
&\KL(\al,\be) - \dotp{\al}{h} + \dotp{\be}{e^h-1} \\
~=~& \dotp{\be}{ \psi(\tfrac{\d\al}{\d\be}) + \psi^*(h) - h \tfrac{\d\al}{\d\be} }
~\geqslant~0.
\end{align*}
The optimal value of $\dotp{\al}{h}-\dotp{\be}{e^h-1}$
is bounded above and below by $\KL(\al,\be)$:
we get~\eqref{eq:KL_dual_mesurable}.
\end{proof}

Since $\dotp{\al}{h}-\dotp{\be}{e^h-1}$ is a
convex function of $(\al,\be)$,
taking the supremum over test functions $h\in\Ff_b(\Xx,\R)$
defines a \emph{convex} divergence:
\begin{prop}\label{prop:KL_cvx}
The $\KL$ divergence is a (jointly) convex
function on $\Mm^+(\Xx)\times\Mm^+(\Xx)$.
\end{prop}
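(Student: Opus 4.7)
The plan is to exploit directly the dual variational formula established just above the statement, namely
\[
\KL(\al,\be) \;=\; \sup_{h\in\Ff_b(\Xx,\R)} \bigl[\,\dotp{\al}{h}-\dotp{\be}{e^h-1}\,\bigr].
\]
This expresses $\KL$ as a supremum of an explicit family of functionals indexed by the test functions $h \in \Ff_b(\Xx,\R)$, and joint convexity will follow from the standard fact that a supremum of (jointly) convex — in fact affine — functions is convex.

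First I would fix an arbitrary $h \in \Ff_b(\Xx,\R)$ and observe that the map
\[
\Phi_h : (\al,\be) \in \Mm^+(\Xx)\times\Mm^+(\Xx) \;\longmapsto\; \dotp{\al}{h}-\dotp{\be}{e^h-1}
\]
is \emph{linear} in $\al$ and linear in $\be$ separately, and hence affine on the product cone $\Mm^+(\Xx)\times\Mm^+(\Xx)$ viewed as a convex subset of the ambient space of signed Radon measure pairs. In particular $\Phi_h$ is jointly convex: for any $(\al_0,\be_0)$, $(\al_1,\be_1)$ and $t\in[0,1]$,
\[
\Phi_h\bigl((1-t)(\al_0,\be_0)+t(\al_1,\be_1)\bigr) \;=\; (1-t)\,\Phi_h(\al_0,\be_0) + t\,\Phi_h(\al_1,\be_1).
\]

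Then I would take the supremum over $h \in \Ff_b(\Xx,\R)$ on both sides. Using the dual formula, the left-hand side equals $\KL\bigl((1-t)(\al_0,\be_0)+t(\al_1,\be_1)\bigr)$, while the right-hand side is bounded from above by $(1-t)\,\KL(\al_0,\be_0) + t\,\KL(\al_1,\be_1)$ because the supremum of a sum is no larger than the sum of the suprema taken independently. This yields the desired joint convexity
\[
\KL\bigl((1-t)(\al_0,\be_0)+t(\al_1,\be_1)\bigr) \;\leqslant\; (1-t)\,\KL(\al_0,\be_0) + t\,\KL(\al_1,\be_1).
\]

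There is essentially no obstacle here: the proof is a one-line consequence of the dual representation, since the only non-trivial ingredient — promoting the primal definition of $\KL$ to the linear-in-$(\al,\be)$ dual formulation — has already been established in the preceding proposition. The only bookkeeping point worth noting is the extension to the case $\al\not\ll\be$, for which $\KL(\al,\be)=+\infty$; but this is automatic, since the convexity inequality is trivially satisfied whenever the right-hand side involves $+\infty$, and the dual formula correctly returns $+\infty$ in that situation as well (the supremum being unbounded, as shown in the proof of the dual representation).
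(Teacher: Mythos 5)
Your proof is correct and is exactly the paper's argument: the remark preceding the proposition derives joint convexity from the dual formula~\eqref{eq:KL_dual_mesurable} by noting that each $h\mapsto\dotp{\al}{h}-\dotp{\be}{e^h-1}$ is affine in $(\al,\be)$ and that a supremum of convex functions is convex. Your additional bookkeeping on the $+\infty$ case is harmless and consistent with the paper's treatment.
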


Going further, the density of continuous functions in the space of bounded
measurable functions allows us to restrict the 
optimization domain:

\begin{prop} \label{dual-KL-continuous}
Under the same assumptions,
\begin{align}
\KL(\al,\be)~=~\sup_{h\in\Cc(\Xx,\R)}~~ \dotp{\al}{h}-\dotp{\be}{e^h-1} \label{eq:KL_dual_continuous}
\end{align}
where $\Cc(\Xx,\R)$ is the space of (bounded) continuous functions
on the compact set $\Xx$.
\end{prop}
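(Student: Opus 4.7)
Since $\Xx$ is compact, every $h \in \Cc(\Xx,\R)$ is bounded, so $\Cc(\Xx,\R) \subset \Ff_b(\Xx,\R)$ and the inequality $\sup_{h \in \Cc(\Xx,\R)} \leqslant \sup_{h \in \Ff_b(\Xx,\R)} = \KL(\al,\be)$ is immediate from the previous proposition. The work is therefore entirely in the reverse direction: given $h \in \Ff_b(\Xx,\R)$ with $M \eqdef \|h\|_\infty < +\infty$, I want to build a sequence $(h_n) \subset \Cc(\Xx,\R)$ such that
\begin{equation*}
\dotp{\al}{h_n} - \dotp{\be}{e^{h_n}-1} \;\longrightarrow\; \dotp{\al}{h} - \dotp{\be}{e^h - 1}.
\end{equation*}
Taking the sup over $h \in \Ff_b(\Xx,\R)$ then yields the claimed equality.

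\textbf{Construction via Lusin + Tietze.} The measure $\al + \be$ is a finite Radon measure on the compact metric space $\Xx$. I invoke Lusin's theorem: for every $\eta > 0$, there exists a closed set $K_\eta \subset \Xx$ with $(\al+\be)(\Xx \setminus K_\eta) < \eta$ on which $h$ is continuous. By the Tietze extension theorem, $h|_{K_\eta}$ extends to a continuous function $h_\eta \in \Cc(\Xx,\R)$; since $|h| \leqslant M$ on $K_\eta$, I can additionally truncate (by composing with the $1$-Lipschitz clamp $t \mapsto \max(-M, \min(t, M))$) so that $\|h_\eta\|_\infty \leqslant M$ while preserving continuity and the values on $K_\eta$. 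Taking $\eta_n \to 0$, the functions $h_{\eta_n}$ coincide with $h$ on sets of $(\al+\be)$-measure at least $(\al+\be)(\Xx) - \eta_n$, so $h_{\eta_n} \to h$ in $(\al+\be)$-measure.

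\textbf{Passing to the limit.} Extract a subsequence (still denoted $h_n$) converging $(\al+\be)$-almost everywhere to $h$. The uniform bound $\|h_n\|_\infty \leqslant M$ gives $|h_n| \leqslant M$ and $e^{h_n} \leqslant e^M$, both dominated by integrable constants since $\al$ and $\be$ are finite. Dominated convergence then yields $\dotp{\al}{h_n} \to \dotp{\al}{h}$ and $\dotp{\be}{e^{h_n}-1} \to \dotp{\be}{e^h-1}$, hence the desired convergence of the objective. Combining with $\KL(\al,\be) = \sup_{\Ff_b} \dotp{\al}{h} - \dotp{\be}{e^h - 1}$ from the previous proposition finishes the proof.

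\textbf{Main obstacle.} The delicate point is the nonlinear term $e^h$: a naive $L^1(\al+\be)$-density argument for continuous functions would not suffice, because controlling $\dotp{\be}{e^{h_n}-1}$ requires domination. This is precisely why the Lusin--Tietze construction is preferable to generic density: it produces approximants with a controlled sup-norm, which is exactly what is needed to dominate $e^{h_n}$ and apply Lebesgue's theorem.
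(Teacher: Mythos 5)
Your proof is correct, and it takes a genuinely different route than the paper. The paper approximates a bounded measurable $h$ by a simple Borel function, then replaces each indicator $\textbf{1}_{A_i}$ by a continuous bump $\phi_i$ sandwiched between a compact $K_i \subset A_i$ and an open $V_i \supset A_i$ (Urysohn on the regularity of Radon measures), producing an explicit error bound $(\|h\|_\infty + \|e^h\|_\infty)\,\delta$ on the dual objective. You instead go directly from bounded measurable to continuous via Lusin's theorem plus Tietze extension (both available since $\al+\be$ is a finite Borel --- hence Radon --- measure on a compact metric space), and you crucially clamp the Tietze extension to $[-M,M]$ to keep a uniform $\sup$-norm bound, which is exactly what lets you dominate $e^{h_n}$ and invoke Lebesgue's theorem. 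The two arguments lean on the same underlying regularity of Radon measures --- indeed Lusin's theorem is itself usually proved via the simple-function-plus-Urysohn mechanism --- so they are close in spirit, but yours is a softer, more modular ``converge along a subsequence and apply DCT'' argument, while the paper's is quantitative with an explicit $\delta$-rate. Your concluding remark correctly identifies the reason a blunt $L^1$-density argument would fail: without the $\sup$-norm control, nothing dominates $e^{h_n}$. One small stylistic note: extracting an a.e.-convergent subsequence suffices for the supremum inequality (one only needs \emph{some} sequence of continuous functions achieving the value), but you could also avoid the extraction entirely by taking $\eta_n$ summable and invoking Borel--Cantelli to get a.e. convergence of the full sequence.
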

\begin{proof}
Let $h = \sum_{i\in I} h_i \,\textbf{1}_{A_i}$ be a simple Borel
function on $\Xx$, and let us choose some error margin $\delta > 0$.
Since $\al$ and $\be$ are Radon measures, for any $i$ in the finite
set of indices $I$, there exists a compact
set $K_i$ and an open set $V_i$ such that $K_i\subset A_i\subset V_i$
and
\begin{align*}
\sum_{i\in I} \max[ \, \al(V_i\backslash K_i)  \,,\, \be(V_i\backslash K_i)   \, ]
\leqslant \delta.
\end{align*}
Moreover, for any $i\in I$, there exists a continuous function
$\phi_i$ such that $\textbf{1}_{K_i}\leqslant \phi_i\leqslant \textbf{1}_{V_i}$.
The continuous function $g = \sum_{i\in I} h_i \phi_i$
is then such that
\begin{align*}
|\dotp{\al}{g-h}| &\leqslant \|h\|_\infty\,\delta
&\text{and} & &
|\dotp{\be}{e^g-e^h}| &\leqslant \|e^h\|_\infty\,\delta
\end{align*}
so that
\begin{align*}
|\,(\dotp{\al}{h}-\dotp{\be}{e^h-1})
\,&-\,
( \dotp{\al}{g}-\dotp{\be}{e^g-1})\,|\\
~\leqslant~& (\|h\|_\infty+\|e^h\|_\infty)\,\delta.
\end{align*}
As we let our simple function approach any measurable
function in $\Ff_b(\Xx,\R)$, choosing $\delta$
arbitrarily small, we then get~\eqref{eq:KL_dual_continuous} 
through \eqref{eq:KL_dual_mesurable}.
\end{proof}

We can then show that the Kullback-Leibler divergence is
weakly lower semi-continuous:

\begin{prop}\label{prop:KL_lsc}
If $\al_n\rightharpoonup \al$ and $\be_n \rightharpoonup \be$
are weakly converging sequences in $\Mm^+(\Xx)$, we get
\begin{align*}
\liminf_{n\rightarrow+\infty} ~\KL(\al_n,\be_n) ~\geqslant~ \KL(\al,\be).
\end{align*}
\end{prop}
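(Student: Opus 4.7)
The plan is to leverage the dual formulation of the Kullback-Leibler divergence established in Proposition \ref{dual-KL-continuous}, which expresses $\KL(\al,\be)$ as a supremum of functionals that are jointly weak$^*$ continuous in $(\al,\be)$. Since a pointwise supremum of continuous (hence lower semi-continuous) functionals is itself lower semi-continuous, the result will essentially follow for free once the correct test-function class has been identified.

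Concretely, I would first fix an arbitrary test function $h \in \Cc(\Xx, \R)$. Because $\Xx$ is compact, both $h$ and $e^h - 1$ are bounded continuous real-valued functions on $\Xx$, so the weak$^*$ convergence assumptions $\al_n \rightharpoonup \al$ and $\be_n \rightharpoonup \be$ give
$$\dotp{\al_n}{h} \longrightarrow \dotp{\al}{h} \qquad \text{and} \qquad \dotp{\be_n}{e^h - 1} \longrightarrow \dotp{\be}{e^h - 1}.$$
Combining these with the lower bound $\KL(\al_n,\be_n) \geq \dotp{\al_n}{h} - \dotp{\be_n}{e^h - 1}$ supplied by \eqref{eq:KL_dual_continuous}, and taking the $\liminf$ in $n$, I obtain
$$\liminf_{n \to +\infty} \KL(\al_n, \be_n) ~\geq~ \dotp{\al}{h} - \dotp{\be}{e^h - 1}.$$
Taking the supremum over all $h \in \Cc(\Xx, \R)$ on the right-hand side and invoking Proposition \ref{dual-KL-continuous} once more yields the claimed inequality $\liminf_n \KL(\al_n, \be_n) \geq \KL(\al, \be)$.

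There is no genuine obstacle in this argument: the hard work of passing from the measurable-function dual \eqref{eq:KL_dual_mesurable} to the continuous-function dual \eqref{eq:KL_dual_continuous} has already been carried out in the previous proposition, and that restriction is precisely what lets us invoke weak$^*$ continuity of each linear form $\al \mapsto \dotp{\al}{h}$. The compactness of $\Xx$ plays a small but essential role by ensuring that $e^h - 1$ is bounded (and thus a valid test function for weak$^*$ convergence) for every continuous $h$; without it one would have to argue by truncation to stay inside $\Cc_b(\Xx,\R)$.
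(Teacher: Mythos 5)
Your proof is correct and takes exactly the same route as the paper: both invoke the continuous-function dual formulation \eqref{eq:KL_dual_continuous} and observe that a pointwise supremum of weak$^*$ continuous functionals $(\al,\be)\mapsto\dotp{\al}{h}-\dotp{\be}{e^h-1}$ is lower semi-continuous. You merely spell out the liminf-and-supremum mechanics that the paper leaves implicit.
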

\begin{proof}
According to \eqref{eq:KL_dual_continuous},
the KL divergence is defined as a pointwise supremum
of weakly continuous applications
\begin{align*}
\phi_h~:~(\al,\be)~\mapsto~ \dotp{\al}{h}-\dotp{\be}{e^h-1},
\end{align*}
for $h\in \Cc(\Xx,\R)$. It is thus
lower semi-continuous for the convergence in law.
\end{proof}

\subsection{SoftMin Operator}

\begin{prop}[The SoftMin interpolates between a minimum and a sum]
	Under the assumptions of the definition \eqref{eq:defn-softmin}, we get that
	\begin{align*}
		\mine{x\sim \al} \phi(x) 
		&\xrightarrow{\epsilon\rightarrow 0} 
		\min_{ x \in \Supp(\al)} \phi(x) \\
		&\xrightarrow{\epsilon\rightarrow +\infty} 
		\qquad\langle\al,\phi\rangle.
	\end{align*}
	If $\phi$ and $\psi$ are two continuous functions 
	in $\Cc(\Xx)$ such that $\phi \leqslant \psi$,
	\begin{align}
		\mine{x\sim \al} \phi(x) 
		\leqslant 
		\mine{x\sim \al} \psi(x). \label{eq:smin_order}
	\end{align}
	Finally, if $K\in\R$ is constant with respect to $x$, we have that 
	\begin{align}
		\mine{x\sim \al} \big[ K+\phi(x)\big]
		~=&~K+\mine{x\sim \al} \big[ \phi(x)\big].
		\label{eq:smin_constant}
	\end{align}
\end{prop}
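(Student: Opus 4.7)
The plan is to dispatch the three claims in order of increasing subtlety. The additive-constant identity \eqref{eq:smin_constant} is a direct algebraic computation: factoring $\exp(-K/\epsilon)$ out of the integral in \eqref{eq:defn-softmin} and pulling it through the logarithm turns $-\epsilon\log\int \exp(-(K+\phi)/\epsilon)\,\d\al$ into $K + \mine{x\sim\al}\phi(x)$. The monotonicity \eqref{eq:smin_order} reduces to two sign flips: $u\mapsto\exp(-u/\epsilon)$ and $u\mapsto -\epsilon\log u$ are both decreasing, so $\phi\leqslant\psi$ reverses direction under the first map and reverses again under the second, recovering $\mine{x\sim\al}\phi(x) \leqslant \mine{x\sim\al}\psi(x)$.

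For the high-temperature limit $\epsilon\to+\infty$, I would Taylor-expand the exponential to first order. Using compactness of $\Xx$ and boundedness of $\phi\in\Cc(\Xx)$, the expansion $\exp(-\phi(x)/\epsilon) = 1 - \phi(x)/\epsilon + O(1/\epsilon^2)$ holds uniformly in $x$; integrating against the probability measure $\al$ gives $1 - \langle\al,\phi\rangle/\epsilon + O(1/\epsilon^2)$. Combining this with $\log(1+u) = u + O(u^2)$ and multiplying by $-\epsilon$ then yields $\mine{x\sim\al}\phi(x) = \langle\al,\phi\rangle + O(1/\epsilon)$, which is the desired limit.

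The main obstacle is the low-temperature limit $\epsilon\to 0$, which is the Laplace / log-sum-exp principle. Setting $m = \min_{x\in\Supp(\al)}\phi(x)$ and applying \eqref{eq:smin_constant} with $K=-m$, I may reduce to the case where $\phi\geqslant 0$ on $\Supp(\al)$ with $\inf_{\Supp(\al)}\phi = 0$, and aim to show $\mine{x\sim\al}\phi(x)\to 0$. For a lower bound, $\exp(-\phi/\epsilon)\leqslant 1$ on $\Supp(\al)$ together with $\al(\Xx)=1$ gives $\int \exp(-\phi/\epsilon)\,\d\al \leqslant 1$, hence $\mine{x\sim\al}\phi(x)\geqslant 0$. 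For an upper bound, fix $\delta>0$ and consider $U_\delta = \{\phi<\delta\}$: it is open by continuity of $\phi$ and contains a point at which $\phi$ attains its minimum on the compact set $\Supp(\al)$, so $\al(U_\delta)>0$ by the very definition of the support. Then
\begin{align*}
\int_\Xx \exp(-\phi/\epsilon)\,\d\al ~\geqslant~ \exp(-\delta/\epsilon)\,\al(U_\delta),
\end{align*}
so that $\mine{x\sim\al}\phi(x) \leqslant \delta - \epsilon\log\al(U_\delta) \to \delta$ as $\epsilon\to 0$. Letting $\delta\to 0$ closes the sandwich and gives the claim. The one delicate step is the strict positivity of $\al(U_\delta)$, which rests on the characterization of $\Supp(\al)$ as the smallest closed set of full $\al$-mass; the continuity of $\phi$ is essential here to ensure that $U_\delta$ is open and that the minimum is actually attained on $\Supp(\al)$.
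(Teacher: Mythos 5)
Your proof is correct. The paper does not actually supply a proof of this proposition -- it is listed under ``Standard results'' and stated without argument -- so there is nothing to compare against; your write-up fills that gap soundly. The two algebraic properties are verified by direct computation exactly as you say. For the high-temperature limit, the uniform Taylor expansion is justified by compactness of $\Xx$ and boundedness of $\phi$, and integrating a probability measure against the constant $1$ gives the leading term cleanly. For the low-temperature Laplace limit, the normalization via \eqref{eq:smin_constant}, the lower bound from $\exp(-\phi/\epsilon)\leqslant 1$ $\al$-a.e., and the upper bound via $\al(U_\delta)>0$ are all correct; the one place worth being explicit is that $\Supp(\al)$ is compact (a closed subset of the compact space $\Xx$), which is what lets you say the minimum of $\phi$ over $\Supp(\al)$ is attained, and hence that $U_\delta$ meets $\Supp(\al)$ and carries positive mass. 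You handle that point correctly.
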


\begin{prop}[The SoftMin operator is continuous]\label{prop:smin_continuous}
	Let $(\al_n)$ be a sequence of probability 
	measures converging \emph{weakly} towards $\al$,
	and $(\phi_n)$ be a sequence of continuous functions
	that converges \emph{uniformly} towards $\phi$.
	Then, for $\epsilon>0$,
	the SoftMin of the values of $\phi_n$ on $\al_n$
	converges towards the SoftMin of the values of $\phi$
	on $\al$, i.e.
	\begin{align*}
		\big( \al_n \rightharpoonup\al
		&,\,
		 \phi_n \xrightarrow{\|\cdot\|_\infty}\phi
		\big)\nonumber\\
		&\Longrightarrow
		\mine{x\sim \al_n} \phi_n(x)
		\rightarrow
		\mine{x\sim \al} \phi(x).
	\end{align*}
\end{prop}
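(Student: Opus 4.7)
The plan is to unwind the definition of the SoftMin and reduce the statement to a convergence of integrals, which splits naturally into two pieces handled by the two hypotheses (uniform convergence of $\phi_n$ and weak convergence of $\al_n$).

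First I would observe that $\mine{x\sim\al}\phi(x) = -\epsilon \log \int_\Xx \exp(-\phi(x)/\epsilon)\,\d\al(x)$, and that $-\epsilon\log$ is continuous on $(0,+\infty)$. Because $\Xx$ is compact and $\phi$ is continuous, $\phi$ is bounded; uniform convergence $\phi_n \to \phi$ then yields a uniform bound $\|\phi_n\|_\infty \leqslant M$ valid for all $n$ large enough (and without loss of generality for all $n$). Consequently, $\exp(-\phi_n/\epsilon) \geqslant e^{-M/\epsilon} > 0$ pointwise, so $\int \exp(-\phi_n/\epsilon)\,\d\al_n \geqslant e^{-M/\epsilon}$, and similarly for the limit. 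It therefore suffices to prove
\begin{align*}
\int_\Xx e^{-\phi_n/\epsilon}\,\d\al_n ~\longrightarrow~ \int_\Xx e^{-\phi/\epsilon}\,\d\al,
\end{align*}
and then compose with the continuous function $-\epsilon\log$ away from $0$.

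To establish that convergence I would insert a telescoping middle term and apply the triangle inequality, writing
\begin{align*}
\bigg| \int e^{-\phi_n/\epsilon}\d\al_n - \int e^{-\phi/\epsilon}\d\al \bigg|
\leqslant \big\| e^{-\phi_n/\epsilon} - e^{-\phi/\epsilon} \big\|_\infty + \bigg| \int e^{-\phi/\epsilon}\,\d(\al_n-\al) \bigg|.
\end{align*}
For the first summand, since $\phi_n/\epsilon$ and $\phi/\epsilon$ take values in the bounded interval $[-M/\epsilon, M/\epsilon]$ where $x\mapsto e^{-x}$ is Lipschitz, uniform convergence of $\phi_n$ to $\phi$ transfers to uniform convergence of $e^{-\phi_n/\epsilon}$ to $e^{-\phi/\epsilon}$; this term vanishes. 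For the second summand, $e^{-\phi/\epsilon}$ is a fixed element of $\Cc(\Xx)$ (composition of continuous functions on a compact set), so the weak$^*$ convergence $\al_n \rightharpoonup \al$ gives convergence to zero by the very definition of the weak$^*$ topology.

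There is no real obstacle here beyond careful bookkeeping: the proof is essentially a change of variables through the exponential, combined with the two standard facts that uniform convergence passes through a Lipschitz map and that weak$^*$ convergence tests against continuous functions. The only subtlety to flag is the mild preliminary check that $\phi_n$ are uniformly bounded, which ensures both the Lipschitz step and the fact that the argument of $-\epsilon\log$ stays away from $0$ uniformly in $n$, so that the outer logarithm is continuous at the limit.
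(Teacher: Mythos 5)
Your proof is correct. Note that the paper states this proposition in its ``Standard results'' appendix without supplying a proof, so there is no author argument to compare against; your argument is the natural one and it is sound. The key bookkeeping is handled properly: the uniform bound $\|\phi_n\|_\infty \leqslant M$ (valid for $n$ large since $\|\phi_n\|_\infty \leqslant \|\phi\|_\infty + \|\phi_n - \phi\|_\infty$) both makes $e^{-x}$ Lipschitz on the relevant compact range and keeps the integrals $\int e^{-\phi_n/\epsilon}\,\d\al_n \geqslant e^{-M/\epsilon}$ uniformly bounded away from zero, so that $-\epsilon\log(\cdot)$ can be applied continuously; the telescoping decomposition then cleanly assigns one term to uniform convergence of $\phi_n$ (using that $\al_n$ has unit mass) and the other to the weak$^*$ convergence of $\al_n$ tested against the fixed continuous function $e^{-\phi/\epsilon}$.
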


\section{Proofs}
\label{sec:ot}


\subsection{Dual Potentials}
\label{appendix:dual-pot}

We first state some important properties of solutions $(\f,\g)$ to the dual problem~\eqref{eq-dual}. Please note that these results hold under the assumption that $(\Xx,\d)$ is a compact metric space, endowed with a \emph{ground cost} function $\C:\Xx\times\Xx\rightarrow\R$ that is $\kappa$-Lipschitz with respect to both of its input variables.

The existence of an optimal pair $(\f,\g)$ of potentials that reaches the maximal value of the dual objective is proved using the contractance of the Sinkhorn map $\T$,
defined in \eqref{eq:sinkhorn-op}, for the Hilbert projective metric~\citep{franklin1989scaling}.

While optimal potentials are only defined $(\al,\be)$-a.e., as highlighted in Proposition~\ref{prop:dual-pot-properties}, they are extended to the whole domain $\Xx$ by imposing, similarly to the classical theory of OT~\cite[Remark 1.13]{santambrogio2015optimal}, that they satisfy 
\eql{\label{eq:dual_optimal_smin}
    \f= \T(\be,\g)
    \qandq
    \g= \T(\al,\f),
}
with $T$ defined in \eqref{eq:sinkhorn-op}.
We thus assume in the following that this condition holds. The following propositions studies the uniqueness and the smoothness (with respect to the spacial position and with respect to the input measures) of these functions $(\f,\g)$ defined on the whole space.

\begin{prop}[Uniqueness of the dual potentials up to an additive constant] 
	\label{prop:uniqueness_dual}
Let $(\f_0,\g_0)$ and $(\f_1,\g_1)$ be two optimal pairs of dual potentials for a problem $\W_\epsilon(\al,\be)$ that satisfy \eqref{eq:dual_optimal_smin}. Then, there exists a constant $K\in\R$ such that
\begin{align}
	\f_0= \f_1+K~\text{and}~\g_0 = \g_1 - K.
	\label{eq:uniqueness_dual}
\end{align}
\end{prop}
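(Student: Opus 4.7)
The plan is to leverage the strict convexity of the primal problem~\eqref{eq-primal-ot}. Because the map $\pi \mapsto \KL(\pi\,|\,\al\otimes\be)$ is strictly convex on the convex set of couplings with marginals $(\al,\be)$, the primal admits a \emph{unique} optimal plan $\pi^\star$. Applying the primal-dual relation~\eqref{eq-primal-dual} to both optimal pairs $(\f_0,\g_0)$ and $(\f_1,\g_1)$, I obtain
\begin{align*}
\exp\!\Bigl(\tfrac{1}{\epsilon}(\f_0\oplus\g_0-\C)\Bigr)\,(\al\otimes\be)
\;=\;\pi^\star\;=\;
\exp\!\Bigl(\tfrac{1}{\epsilon}(\f_1\oplus\g_1-\C)\Bigr)\,(\al\otimes\be),
\end{align*}
so that $\f_0(x)+\g_0(y) \;=\; \f_1(x)+\g_1(y)$ for $(\al\otimes\be)$-a.e. $(x,y)$, i.e.\ on $\Supp(\al)\times\Supp(\be)$.

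Next, I would separate variables. Rewriting the previous identity as $\f_0(x)-\f_1(x)\;=\;\g_1(y)-\g_0(y)$, I notice that the left-hand side depends only on $x$ and the right-hand side only on $y$; both must therefore coincide with a single constant $K\in\R$. Hence $\f_0=\f_1+K$ on $\Supp(\al)$ and $\g_0=\g_1-K$ on $\Supp(\be)$.

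It remains to extend these identities from the supports to the whole of $\Xx$. This is where the convention~\eqref{eq:dual_optimal_smin} becomes crucial: by assumption, both $(\f_i,\g_i)$ satisfy $\f_i=\T(\be,\g_i)$ and $\g_i=\T(\al,\f_i)$ everywhere on $\Xx$. Using the translation property~\eqref{eq:smin_constant} of the SoftMin operator together with the fact that $\T(\be,\g)(y)$ only involves the values of $\g$ on $\Supp(\be)$, I compute, for every $y\in\Xx$,
\begin{align*}
\f_0(y)\;=\;\T(\be,\g_0)(y)
\;=\;\mine{x\sim\be}\bigl[\C(x,y)-\g_1(x)+K\bigr]
\;=\;K+\T(\be,\g_1)(y)\;=\;K+\f_1(y),
\end{align*}
and the symmetric computation gives $\g_0=\g_1-K$ on all of $\Xx$, yielding~\eqref{eq:uniqueness_dual}.

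The only delicate point is to justify that the primal has a unique minimiser; strict convexity of $\KL(\,\cdot\,|\,\al\otimes\be)$ on its effective domain, combined with linearity of $\pi\mapsto\int\C\,\d\pi$ and convexity of the marginal constraint set, handles this cleanly. Everything else is bookkeeping: the separation-of-variables step and the extension through $\T$ only rely on the elementary identity~\eqref{eq:smin_constant}, so no delicate analytic estimate is needed.
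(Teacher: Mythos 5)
Your proof is correct, but it follows a genuinely different route from the one in the paper. You go through the \emph{primal}: you invoke strict convexity of $\pi\mapsto\KL(\pi\,|\,\al\otimes\be)$ to obtain uniqueness of the optimal plan $\pi^\star$, then feed both dual pairs into the primal--dual relation~\eqref{eq-primal-dual} and match the resulting densities. The paper instead stays entirely within the \emph{dual} problem~\eqref{eq-dual}: it takes the affine path $(\f_t,\g_t)=(1-t)(\f_0,\g_0)+t(\f_1,\g_1)$, notes that the dual objective $\phi(t)$ along this path is concave and attains its maximum at both endpoints (hence is constant), and forces $\phi''(t)=0$; this second-derivative computation yields exactly the identity $(\f_1-\f_0)\oplus(\g_1-\g_0)=0$ $(\al\otimes\be)$-a.e. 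From that point on, the two proofs coincide (separation of variables, then extension to all of $\Xx$ via the SoftMin convention~\eqref{eq:dual_optimal_smin} and property~\eqref{eq:smin_constant}). Your argument is the more classical textbook one and makes the ``why'' transparent (the optimal coupling is rigid); the paper's is more self-contained in that it never needs to pass through the primal or to justify the exact form of the primal--dual correspondence, only the fact that the dual value is attained. One small point of rigor you elide: the step from ``$(\al\otimes\be)$-a.e.'' to ``on $\Supp(\al)\times\Supp(\be)$'' implicitly uses the continuity of the potentials, which is available here since they satisfy~\eqref{eq:dual_optimal_smin} (and are even Lipschitz), but is worth flagging; the subsequent separation of variables also needs a brief Fubini argument to pick an admissible $y_0$. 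These are the same glosses as in the paper's own proof, so this is a matter of polish, not a gap.
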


\begin{proof}
	For $t\in[0,1]$, let us define
	$\f_t = \f_0 + t(\f_1-\f_0)$,
	$\g_t = \g_0 + t(\g_1-\g_0)$ and
	\begin{align*}
	\phi(t)= 
	&\langle\al,\f_t\rangle
	+
	\langle\be,\g_t\rangle\\
	&-
	\epsilon\langle\al\otimes\be,
	\exp\big(\tfrac{1}{\epsilon}(\f_t\oplus \g_t - \C)\big)-1\rangle,
	\end{align*}
	the value of the dual objective between the two optimal pairs.
	As $\phi$ is a concave function bounded above by 
	$\phi(0)=\phi(1)=\W_\epsilon(\al,\be)$, it is \emph{constant} with respect to $t$.
	Hence, for all $t$ in $[0,1]$,
	\begin{align*}
		\hspace{-.3cm}
		0 &= \phi''(t)\\
		&=
		-\tfrac{1}{\epsilon}
		\langle \al\otimes\be,
		e^{(\f_t\oplus \g_t - \C)/\epsilon }
		((\f_1-\f_0)\oplus (\g_1-\g_0))^2\rangle.
	\end{align*}
	This is only possible if, $\al\otimes\be$-a.e. in $(x,y)$, 
	\begin{align*}
		\big(\f_1(x)-\f_0(x)+\g_1(y)&-\g_0(y)\big)^2=0,
	\end{align*}
	i.e. there exists a constant $K\in\R$ such that
	\begin{align*}
		& \f_1(x)-\f_0(x)= +K\quad\text{$\al$-a.e.} \\
		& \g_1(y)-\g_0(y)=-K\quad\text{$\be$-a.e.}
	\end{align*}
	As we extend the potentials through~\eqref{eq:dual_optimal_smin}, 
	the SoftMin operator commutes with the addition of $K$~\eqref{eq:smin_constant}
	and lets our result hold on the whole feature space.
\end{proof}

\begin{prop}[Lipschitz property]
The optimal potentials $(\ab)$ of the dual problem~\eqref{eq-dual}
are both $\kappa$-Lipschitz functions on the feature space $(\Xx,\d)$.
\end{prop}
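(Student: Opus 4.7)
The plan is to use the fixed-point characterization \eqref{eq:dual_optimal_smin} to transfer the Lipschitz regularity of $\C$ directly onto the potentials. Since $\g = \T(\al,\f)$ is defined as a SoftMin in the variable $y$ of the expression $\C(x,y) - \f(x)$, and this function depends on $y$ only through $\C(\cdot,y)$, we should expect $\g$ to inherit the $\kappa$-Lipschitz regularity of $y \mapsto \C(x,y)$. The same reasoning applied to $\f = \T(\be,\g)$ then yields the result for $\f$ by symmetry.

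More concretely, I would fix two points $y_1, y_2 \in \Xx$ and write, using the $\kappa$-Lipschitz property of $\C$ in its second variable,
\begin{align*}
    \C(x, y_2) - \f(x) \;\leqslant\; \C(x, y_1) - \f(x) + \kappa\, \d(y_1, y_2)
\end{align*}
pointwise in $x$. I would then apply, in order, the monotonicity property \eqref{eq:smin_order} of the SoftMin with respect to $x \sim \al$, followed by the constant-shift property \eqref{eq:smin_constant} (noting that $\kappa\,\d(y_1,y_2)$ does not depend on $x$), to obtain
\begin{align*}
    \g(y_2) \;=\; \mine{x \sim \al} \big[\C(x,y_2) - \f(x)\big] \;\leqslant\; \g(y_1) + \kappa\,\d(y_1, y_2).
\end{align*}
Exchanging the roles of $y_1$ and $y_2$ yields $|\g(y_2) - \g(y_1)| \leqslant \kappa\, \d(y_1,y_2)$.

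The argument for $\f$ is strictly symmetric: since $\C$ is also $\kappa$-Lipschitz in its first variable and $\f = \T(\be, \g)$ is defined as a SoftMin of the expression $\C(x,y) - \g(y)$ now with respect to $y \sim \be$ (viewed as a function of $x$), the very same three-line computation with the roles of $\al$ and $\be$ (respectively $\f$ and $\g$) swapped gives the $\kappa$-Lipschitz bound for $\f$. I do not anticipate a real obstacle here: the only subtle point is that the bound must hold on the \emph{whole} domain $\Xx$ rather than only $(\al,\be)$-almost everywhere, and this is precisely why we insisted in \eqref{eq:dual_optimal_smin} on extending the potentials via the Sinkhorn map $\T$ before starting the argument.
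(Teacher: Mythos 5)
Your proposal is correct and is exactly the argument the paper has in mind: the paper's proof only sketches that ``$\f$ is a SoftMin combination of $\kappa$-Lipschitz functions'' and appeals to the same two SoftMin properties \eqref{eq:smin_order}--\eqref{eq:smin_constant}, whereas you spell out the three-line computation in detail. Same approach, just more explicit.
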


\begin{proof}
According to~\eqref{eq:dual_optimal_smin}, 
$\f$ is a SoftMin combination of 
$\kappa$-Lipschitz functions of the variable $x$;
using the algebraic properties of the SoftMin operator
detailed in (\ref{eq:smin_order}-\ref{eq:smin_constant}), 
one can thus show that $\f$ is a $\kappa$-Lipschitz function
on the feature space.
The same argument holds for $\g$.
\end{proof}

\begin{prop}[The dual potentials vary continuously with the input measures] 
\label{lem-uniform-conv}
Let $\al_n \rightharpoonup \al$ and 
$\be_n \rightharpoonup \be$ be weakly converging 
sequences of measures in $\Mm_1^+(\Xx)$.
Given some arbitrary anchor point $x_o\in\Xx$,
let us denote by $(\f_n,\g_n)$ the (unique) sequence of optimal
potentials for $\W_\epsilon(\al_n,\be_n)$ such that $\f_n(x_o)=0$.

Then, $\f_n$ and $\g_n$ converge uniformly towards the 
unique pair of optimal potentials $(\ab)$ for $\W_\epsilon(\al,\be)$
such that $\f(x_o)=0$.
Up to the value at the anchor point $x_o$, we thus have that
\begin{align*}
\big( \al_n \rightharpoonup \al, \,
\be_n \rightharpoonup \be
\big)\Longrightarrow
\big( \f_n \xrightarrow{\|\cdot\|_\infty} \f, \,
\g_n \xrightarrow{\|\cdot\|_\infty} \g
\big).
\end{align*}
\end{prop}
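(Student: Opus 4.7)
The plan is a compactness/uniqueness argument based on Arzelà–Ascoli. First, I would use the Lipschitz property of the dual potentials together with the normalization $\f_n(x_o) = 0$ to control the family $(\f_n)_n$. Since each $\f_n$ is $\kappa$-Lipschitz on the compact metric space $(\Xx,\d)$, the bound $|\f_n(x)| \leqslant \kappa\, \mathrm{diam}(\Xx)$ follows immediately from $\f_n(x_o)=0$. To bound $\g_n$, I would use the identity $\g_n = \T(\al_n, \f_n)$: since $\C$ is bounded on $\Xx\times\Xx$ and $\f_n$ is uniformly bounded, the algebraic identities \eqref{eq:smin_order}–\eqref{eq:smin_constant} give a uniform sup-norm bound on $\g_n$. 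Combined with the $\kappa$-Lipschitz property established in the previous proposition, the families $(\f_n)_n$ and $(\g_n)_n$ are uniformly bounded and uniformly equicontinuous.

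Second, by Arzelà–Ascoli, any subsequence admits a further subsequence (still denoted $(\f_n,\g_n)$) that converges uniformly to some pair $(\tilde \f, \tilde\g) \in \Cc(\Xx)^2$. I would then pass to the limit in the fixed-point equations $\f_n = \T(\be_n, \g_n)$ and $\g_n = \T(\al_n, \f_n)$ using Proposition~\ref{prop:smin_continuous}: for each fixed $y\in\Xx$, the function $x\mapsto \C(x,y)-\g_n(x)$ converges uniformly to $x\mapsto \C(x,y) - \tilde\g(x)$ while $\be_n \rightharpoonup \be$, so the SoftMin passes to the limit. A uniform version of this statement (which follows from the uniform equicontinuity of the integrands in $y$) upgrades pointwise convergence to uniform convergence and yields
\begin{align*}
\tilde\f = \T(\be, \tilde\g), \qquad \tilde\g = \T(\al, \tilde\f).
\end{align*}
Hence $(\tilde\f,\tilde\g)$ is a canonical optimal dual pair for $\W_\epsilon(\al,\be)$, and by uniform convergence $\tilde\f(x_o) = \lim_n \f_n(x_o) = 0$.

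Third, by Proposition~\ref{prop:uniqueness_dual}, optimal potentials for $\W_\epsilon(\al,\be)$ satisfying the canonical extension~\eqref{eq:dual_optimal_smin} are unique up to the additive constant $(+K,-K)$; imposing the anchor condition $\tilde\f(x_o)=0=\f(x_o)$ pins down $K=0$, so $(\tilde\f,\tilde\g)=(\f,\g)$. Since the limit of every uniformly convergent subsequence is the same pair $(\f,\g)$, and the sequence itself is relatively compact in $\Cc(\Xx)^2$ by Arzelà–Ascoli, a standard subsequence-of-subsequence argument gives uniform convergence of the full sequence: $\f_n \to \f$ and $\g_n \to \g$ in $\|\cdot\|_\infty$.

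The main obstacle is the limit passage in the nonlinear Sinkhorn map $\T$, which mixes weak$^*$ convergence of measures with uniform convergence of test functions. This is precisely what Proposition~\ref{prop:smin_continuous} is designed for; the only subtlety is ensuring that the convergence of $\T(\be_n,\g_n)(y)$ is uniform in $y$, not merely pointwise, which follows from the uniform equicontinuity in $y$ of the family $(x,y)\mapsto \C(x,y) - \g_n(x)$ guaranteed by the Lipschitz bounds on $\C$ and $\g_n$.
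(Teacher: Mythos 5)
Your proof matches the paper's argument step for step: Lipschitz bounds and the anchor normalization give uniform boundedness and equicontinuity, Arzel\`a--Ascoli extracts a uniformly convergent subsequence, the SoftMin continuity (Proposition~\ref{prop:smin_continuous}) identifies the limit as a canonical optimal pair, uniqueness pins it down, and the unique-accumulation-point argument upgrades to convergence of the whole sequence. One small remark: the ``uniform version'' of the limit passage in $\T$ that you flag as a subtlety is not actually needed -- Arzel\`a--Ascoli already provides uniform convergence of $\f_n$ to $\tilde\f$, so it suffices to identify $\tilde\f(y)=\T(\be,\tilde\g)(y)$ pointwise in $y$ to conclude $\tilde\f=\T(\be,\tilde\g)$ as continuous functions.
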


\begin{proof}
	For all $n$ in $\mathbb{N}$, the potentials $\f_n$ and $\g_n$
	are $\kappa$-Lipschitz functions on the compact, bounded set $\Xx$.
	As $\f_n(x_o)$ is set to zero, we can bound $|\f_n|$ on $\Xx$
	by $\kappa$ times the diameter of $\Xx$;
	combining this with~\eqref{eq:dual_optimal_smin},
	we can then produce a uniform bound on both $\f_n$ and $\g_n$:
	there exists a constant $M\in\R$ such that 
	\begin{align*}
		\foralls n\in\mathbb{N}, \foralls x\in\Xx, 
		-M\leqslant \f_n(x), \g_n(x) \leqslant+M.
	\end{align*}
	
	Being equicontinuous and uniformly bounded on the compact set $\Xx$,
	the sequence $(\f_n,\g_n)_n$ satisfies the hypotheses of the
	Ascoli-Arzela theorem: there exists a subsequence $(\f_{n_k},\g_{n_k})_k$
	that converges uniformly towards a pair $(\ab)$ of continuous functions.
	 $k$ tend to infinity, we see that $\f(x_o)=0$ and,
	using the continuity of the SoftMin operator (Proposition~\ref{prop:smin_continuous})
	on the optimality equations~\eqref{eq:optim-conditions-dual},
	we show that $(\ab)$ is an optimal pair for $\W_\epsilon(\al,\be)$.

	Now, according to Proposition~\ref{prop:uniqueness_dual},
	such a limit pair of optimal potentials $(\ab)$ is \emph{unique}.
	$(\f_n,\g_n)_n$ is thus a \emph{compact} sequence with a 
	\emph{single} possible adherence value:
	it has to converge, uniformly, towards $(\ab)$.
\end{proof}

\subsection{Proof of Proposition~\ref{prop:differentiability-ot}}
\label{appendix:prop:differentiability-ot}

The proof is mainly inspired from~\cite[Proposition 7.17]{santambrogio2015optimal}. 
Let us consider $\al$, $\delta\al$, $\be$, $\delta\be$
and times $t$ in a neighborhood of $0$, as in the statement above.
We define $\al_t=\al+t\delta\al$,
$\be_t=\be+t\delta\be$ and the variation ratio $\De_t$
given by
\begin{align*}
	\De_t \eqdef \frac{\W_\epsilon(\al_t,\be_t) - \W_\epsilon(\al,\be)}{t}.
\end{align*}
Using the very definition of $\W_\epsilon$
and the continuity property of Proposition~\ref{lem-uniform-conv},
we now provide lower and upper bounds on $\De_t$
as $t$ goes to $0$.

\paragraph{Weak$^*$ continuity.}

As written in~\eqref{eq:strong_duality},  $\W_\epsilon(\al,\be)$ can be computed through a straightforward, \emph{continuous} expression that does not depend on the value of the optimal dual potentials $(\ab)$ at the anchor point~$x_o$:
\begin{align*}
		\W_\epsilon(\al,\be)=
		\langle\al,\f\rangle+
		\langle\be,\g\rangle.
\end{align*}
Combining this equation with Proposition~\ref{lem-uniform-conv} (that guarantees the \emph{uniform} convergence of potentials  for weakly converging sequences of probability measures) allows us to conclude.

\paragraph{Lower bound.}

First, let us remark that $(\ab)$ is a
\emph{suboptimal} pair of dual potentials for
$\W_\epsilon(\al_t,\be_t)$.
Hence,
\begin{align*}
\W_\epsilon(\al_t,\be_t)
&\geqslant
\langle\al_t,\f\rangle
+
\langle\be_t,\g\rangle \\
&-
\epsilon\langle \al_t\otimes\be_t
, \exp\big( \tfrac{1}{\epsilon}( \f\oplus \g - \C) \big)-1\rangle
\nonumber
\end{align*}
and thus, since 
\begin{align*}
 \W_\epsilon(\al,\be) &= \langle\al,\f\rangle
+\langle\be,\g\rangle\\
&-\epsilon \langle\al\otimes\be, \exp(\tfrac{1}{\epsilon}(\f\oplus \g-\C))-1\rangle,
\end{align*}
one has
\begin{align*}
\De_t &\geqslant
\langle\delta\al,\f\rangle
+
\langle\delta\be,\g\rangle\\
&- \epsilon 
\langle \delta\al\otimes\be +
\al\otimes\delta\be , \exp(\tfrac{1}{\epsilon}(\f\oplus \g-\C))
\rangle+o(1) \nonumber \\
&\geqslant
\langle\delta\al,\f-\epsilon\rangle
+
\langle\delta\be,\g-\epsilon\rangle+o(1),
\end{align*}
since $\g$ and $\f$ satisfy the optimality 
equations~\eqref{eq:optim-conditions-dual}.

\paragraph{Upper bound.}

Conversely, let us denote by $(\g_t,\f_t)$ the optimal pair 
of potentials for $\W_\epsilon(\al_t,\be_t)$
satisfying $\g_t(x_o)=0$ for some arbitrary
anchor point $x_o\in\Xx$.
As $(\f_t,\g_t)$ are suboptimal potentials for $\W_\epsilon(\al,\be)$,
we get that
\begin{align*}
\W_\epsilon(\al,\be)
&\geqslant
\langle\al ,\f_t\rangle
+
\langle\be,\g_t\rangle \\
&-
\epsilon\langle \al\otimes\be
, \exp\big( \tfrac{1}{\epsilon}( \f_t\oplus \g_t - \C) \big)-1\rangle
\nonumber
\end{align*}
and thus, since 
\begin{align*}
 \W_\epsilon(\al_t,\be_t) &= \langle\al_t,\f_t\rangle+\langle\be_t,\g_t\rangle\\
 &-\epsilon \langle\al_t\otimes\be_t, \exp(\tfrac{1}{\epsilon}(\f_t\oplus \g_t-\C))-1\rangle,
\end{align*}
\begin{align*}
\De_t &\leqslant
\langle\delta\al,\f_t\rangle
+
\langle\delta\be,\g_t\rangle\\
&- \epsilon 
\langle \delta\al\otimes\be_t +
\al_t\otimes\delta\be 
, \exp(\tfrac{1}{\epsilon}(\f_t\oplus \g_t-\C))
\rangle+o(1) \nonumber \\
&\leqslant
\langle\delta\al,\f_t-\epsilon\rangle
+
\langle\delta\be,\g_t-\epsilon\rangle+o(1).
\end{align*}

\paragraph{Conclusion.}

Now, let us remark that as $t$ goes to $0$
\begin{align*}
\al+t\delta\al &\rightharpoonup \al
& \text{and}&
&
\be+t\delta\be &\rightharpoonup \be.
\end{align*}
Thanks to Proposition~\ref{lem-uniform-conv},
we thus know that $\f_t$ and $\g_t$ converge uniformly
towards $\f$ and $\g$.
Combining the lower and upper bound, we get
\begin{align*}
\De_t \xrightarrow{t\rightarrow0}
\langle\delta\al,\f-\epsilon\rangle
+
\langle\delta\be,\g-\epsilon\rangle
=
\langle\delta\al,\f\rangle
+
\langle\delta\be,\g\rangle,
\end{align*}
since $\delta\al$ and $\delta\be$ both have
an overall mass that sums up to zero.

\subsection{Proof of Proposition~\ref{prop:chgt_variable}}
\label{appendix:prop:chgt_variable}



The definition of $\W_\epsilon(\al,\al)$ is that
\begin{align*}
\W_\epsilon(\al,\al)
=
\max_{(\ab)\in \Cc(\Xx)^2}
&\langle\al,\f+\g\rangle\\
&-\epsilon
\langle \al\otimes\al,
e^{(\f\oplus \g - \C )/\epsilon}-1 \rangle.
\end{align*}

\paragraph{Reduction of the problem.}

Thanks to the symmetry of this concave problem
with respect to the variables $\f$ and $\g$,
we know that there exists a pair $(\f,\g=\f)$
of optimal potentials on the diagonal, and
\begin{alignat*}{8}
\W_\epsilon(\al,\al)
=
&\max_{\f\in \Cc(\Xx)}
2 \langle\al,\f\rangle\\
&\quad -\epsilon 
\langle \al\otimes\al,
e^{(\f\oplus \f - \C )/\epsilon}-1 \rangle.
\end{alignat*}
Thanks to the density of continuous
functions in the set of simple measurable functions,
just as in the proof of Proposition~\ref{dual-KL-continuous},
we show that this maximization can be done
in the full set of measurable functions
$\Ff_b(\Xx,\R)$:
\begin{alignat*}{8}
\W_\epsilon(\al,\al)
=
&\max_{\f\in \Ff_b(\Xx,\R)}
2 \langle\al,\f\rangle\\
&\quad -\epsilon 
\langle \al\otimes\al,
e^{(\f\oplus \f - \C )/\epsilon}-1 \rangle \\
= 
&\max_{\f\in \Ff_b(\Xx,\R)}
 2 \langle\al,\f\rangle\\
&\quad -\epsilon
\langle \exp(\f/\epsilon)\al, 
k_\epsilon\star \exp(\f/\epsilon)\al\rangle
+\epsilon,
\end{alignat*}
where $\star$ denotes the smoothing (convolution) operator
defined through
\begin{align*}
[k\star \mu](x)~=~ \int_\Xx k(x,y)\,\d\mu(y)
\end{align*}
for $k\in\Cc(\Xx\times \Xx)$ and $\mu\in \Mm^+(\Xx)$.

\paragraph{Optimizing on measures.}

Through a change of variables
\begin{align*}
\mu&=\exp(\f/\epsilon)\,\al
&\text{i.e.}& &
\f&= \epsilon\log\tfrac{\d\mu}{\d\al},
\end{align*}
keeping in mind that
$\al$ is a probability measure,
we then get that
\begin{alignat*}{8}
\W_\epsilon(\al,\al)
=
\epsilon\max_{\mu\in\Mm^+(\Xx),\al\ll\mu\ll\al}
&2 \langle\al,\log \tfrac{\d\mu}{\d\al}\rangle
\\- 
&\langle \mu, 
k_\epsilon\star \mu \rangle
+1\\
-\tfrac{1}{2}\W_\epsilon(\al,\al)
=
\epsilon\min_{\mu\in\Mm^+(\Xx),\al\ll\mu\ll\al}
&\langle\al,\log \tfrac{\d\al}{\d\mu}\rangle
\\ +\tfrac{1}{2}
&\langle \mu,
k_\epsilon\star \mu \rangle
-\tfrac{1}{2},
\end{alignat*}
where we optimize on positive measures $\mu \in\Mm^+(\Xx)$
such that $\al \ll \mu$ \text{and} $\mu \ll \al$.

\paragraph{Expansion of the problem.}

As $k_\epsilon(x,y)=\exp(-\C(x,y)/\epsilon)$ 
is positive for all $x$ and $y$ in $\Xx$,
we can remove the $\mu\ll\al$ constraint from the optimization problem:
\begin{alignat*}{8}-\tfrac{1}{2}\W_\epsilon(\al,\al)=
\epsilon\min_{\mu\in\Mm^+(\Xx),\al\ll\mu}
&\langle\al,\log \tfrac{\d\al}{\d\mu}\rangle
\\+\tfrac{1}{2}
&\langle \mu, 
k_\epsilon\star \mu \rangle
-\tfrac{1}{2}.
\end{alignat*}
Indeed, restricting a positive measure $\mu$ 
to the support of $\al$ lowers the right-hand term
$\langle\mu,k_\epsilon\star\mu\rangle$
without having any influence on the density 
of $\al$ with respect to $\mu$.
Finally, let us remark that the $\al\ll\mu$ constraint
is already encoded in the $\log\tfrac{\d\al}{\d\mu}$ operator,
which blows up to infinity if $\al$ has no density with respect to $\mu$;
all in all, we thus have:
\begin{alignat*}{8}
\F_\epsilon(\al)
&=-\tfrac{1}{2}\W_\epsilon(\al,\al)\\
&=
\epsilon\min_{\mu\in\Mm^+(\Xx)}
\langle\al,\log \tfrac{\d\al}{\d\mu}\rangle
+\tfrac{1}{2}
\langle \mu, 
k_\epsilon\star \mu \rangle-\tfrac{1}{2}
,
\end{alignat*}
which is the desired result.

\paragraph{Existence of the optimal measure $\mu$.}
In the expression above, the existence of an optimal $\mu$
is given as a consequence of the well-known fact
from OT theory that optimal dual potentials
$\f$ and $\g$ \emph{exist},
so that the dual OT problem \eqref{eq-dual}
is a max and not a mere supremum.
Nevertheless, since this property of $\F_\epsilon$
is key to the metrization of the convergence
in law by Sinkhorn divergences,
let us endow it with a direct,
alternate proof:

\begin{prop}\label{prop:Feps_unicity}
For any $\al\in\Mm_1^+(\Xx)$, assuming that $\Xx$ is compact,
there exists a unique $\mu_\al\in\Mm^+(\Xx)$ such that
\begin{align*}
\F_\epsilon(\al)~=~ \epsilon\,\big[\, 
\langle\al,\log \tfrac{\d\al}{\d\mu_\al}\rangle
+\tfrac{1}{2}
\langle \mu_\al, 
k_\epsilon\star \mu_\al \rangle-\tfrac{1}{2}
\,\big].
\end{align*}
Moreover, $\al \ll \mu_\al \ll \al$.
\end{prop}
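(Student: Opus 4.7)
The plan is to apply the direct method of the calculus of variations to the functional
\[
J(\mu) \;=\; \dotp{\al}{\log\tfrac{\d\al}{\d\mu}} \;+\; \tfrac{1}{2}\|\mu\|_{k_\epsilon}^2
\]
on $\Mm^+(\Xx)$, with the convention $J(\mu)=+\infty$ whenever $\al\not\ll\mu$. By Proposition~\ref{prop:chgt_variable}, the infimum of $J$ equals $\F_\epsilon(\al)/\epsilon + \tfrac{1}{2}$, so existence and uniqueness of the claimed $\mu_\al$ amount to showing that $J$ admits a unique minimizer satisfying $\al \ll \mu_\al \ll \al$.

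Existence would follow from a compactness and lower semi-continuity argument. Given a minimizing sequence $(\mu_n)$, the compactness of $\Xx$ and the Lipschitz property of $\C$ give $k_\epsilon \geqslant c := \exp(-\|\C\|_\infty/\epsilon) > 0$, so $\|\mu_n\|_{k_\epsilon}^2 \geqslant c\,\mu_n(\Xx)^2$; Jensen's inequality applied to $-\log$ gives $\dotp{\al}{\log(\d\al/\d\mu_n)} \geqslant -\log\mu_n(\Xx)$. Together these bounds confine the masses $\mu_n(\Xx)$ to a compact subinterval of $(0,+\infty)$, and Banach--Alaoglu yields a weak-$^*$ cluster point $\mu_\al \in \Mm^+(\Xx)$. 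The main obstacle is the lower semi-continuity of the first summand of $J$, whose integrand blows up where $\d\al/\d\mu$ is small; the clean fix is to rewrite it as $\KL(\al|\mu) + 1 - \mu(\Xx)$ and invoke Proposition~\ref{prop:KL_lsc} for the $\KL$ term, together with the weak-$^*$ continuity of $\mu \mapsto \mu(\Xx)$ and of $\mu \mapsto \|\mu\|_{k_\epsilon}^2$ (since $k_\epsilon$ is continuous on the compact space $\Xx\times\Xx$). Hence $J(\mu_\al) \leqslant \liminf J(\mu_n)$, and $\mu_\al$ is a minimizer.

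Uniqueness would follow from strict convexity of $J$: the quadratic form $\mu \mapsto \tfrac{1}{2}\|\mu\|_{k_\epsilon}^2$ is strictly convex on signed Radon measures because $k_\epsilon$, being a positive universal kernel, induces a strictly positive definite bilinear form; adding it to the (plainly) convex $\KL$-like term makes $J$ strictly convex where finite, so $\mu_\al$ is unique. It remains to establish mutual absolute continuity. Finiteness of $J(\mu_\al)$ directly forces $\al \ll \mu_\al$. For the converse, I would compute the first variation of $J$ at $\mu_\al$ along a signed perturbation $\nu\ll\mu_\al$ with $|\d\nu/\d\mu_\al|\leqslant 1$ (so that $\mu_\al + t\nu \in \Mm^+(\Xx)$ for small $|t|$); a direct differentiation yields
\[
\tfrac{\d}{\d t}\,J(\mu_\al + t\nu)\bigr|_{t=0} \;=\; \int_\Xx \bigl(k_\epsilon \star \mu_\al \;-\; \tfrac{\d\al}{\d\mu_\al}\bigr)\,\d\nu.
\]
Optimality forces this integral to vanish for every admissible $\nu$, yielding the Euler--Lagrange identity $\d\al/\d\mu_\al = k_\epsilon\star\mu_\al$ $\,\mu_\al$-almost everywhere. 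Since $k_\epsilon > 0$ and $\mu_\al \neq 0$, the right-hand side is a continuous strictly positive function, so $\d\al/\d\mu_\al > 0$ $\mu_\al$-a.e., which is exactly $\mu_\al \ll \al$.
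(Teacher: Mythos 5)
Your proposal is correct and follows essentially the same direct-method template as the paper: bound the mass of a minimizing sequence, extract a weak-$^*$ limit via Banach--Alaoglu, use Proposition~\ref{prop:KL_lsc} for lower semi-continuity, and invoke strict convexity (from the universality of $k_\epsilon$ and Proposition~\ref{prop:KL_cvx}) for uniqueness. Two aspects differ. First, for the mass bound, the paper rewrites the functional as $\KL(\al,\mu) + \dotp{\al-\mu}{1} + \tfrac12\|\mu\|_{k_\epsilon}^2 - \tfrac12$ and pairs the positivity of $\KL$ with the lower bound $\|\mu\|_{k_\epsilon}^2 \geqslant \eta\,\dotp{\mu}{1}^2$ to get a one-sided bound $\E_\epsilon(\al,\mu) \geqslant \dotp{\mu}{1}(\eta\dotp{\mu}{1}-1) - \tfrac12$; you instead use Jensen to also bound the mass away from $0$, which you then exploit to know $\mu_\al \neq 0$. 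Both arguments are valid. Second, and more substantively: you explicitly establish $\mu_\al \ll \al$ via a first-variation (Euler--Lagrange) argument, obtaining $\tfrac{\d\al}{\d\mu_\al} = k_\epsilon\star\mu_\al > 0$ $\mu_\al$-a.e., and thereby $\mu_\al \ll \al$. The paper's own proof of this Proposition stops after uniqueness and does not address the claimed $\mu_\al \ll \al$; it only invokes an informal restriction-to-support remark earlier in the derivation of Proposition~\ref{prop:chgt_variable}, which by itself does not yield absolute continuity. Your Euler--Lagrange step is therefore a genuine and welcome completion of the argument, and as a bonus it identifies the optimality condition $\d\al/\d\mu_\al = k_\epsilon\star\mu_\al$, which is consistent with the change of variables $\mu = e^{\f/\epsilon}\al$ and the symmetric Sinkhorn fixed point.
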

\begin{proof}
Notice that for $(\al,\mu)\in\Mm_1^+(\Xx)\times\Mm^+(\Xx)$,
\begin{align*}
\E_\epsilon(\al,\mu)~&\eqdef~\langle\al,\log \tfrac{\d\al}{\d\mu}\rangle
+\tfrac{1}{2}
\langle \mu, 
k_\epsilon\star \mu \rangle \\
&=~ \KL(\al,\mu)~+~\dotp{\al-\mu}{1}~+~\tfrac{1}{2}\|\mu\|_{k_\epsilon}^2-\tfrac{1}{2}.
\end{align*}
Since $\C$ is bounded on the compact set $\Xx\times\Xx$ and $\al$
is a probability measure, we can already say that
\begin{align*}
\tfrac{1}{\epsilon}\F_\epsilon(\al)
\leqslant \E_\epsilon(\al,\al)-\tfrac{1}{2}
=\tfrac{1}{2}\dotp{\al\otimes\al}{e^{-\C/\epsilon}}-\tfrac{1}{2} <+\infty.
\end{align*}

\paragraph{Upper bound on the mass of $\mu$.}
Since $\Xx\times\Xx$ is compact and $k_\epsilon(x,y)>0$,
there exists $\eta > 0$ such that
$k(x,y)>\eta$ for all $x$ and $y$ in $\Xx$.
We thus get
\begin{align*}
\|\mu\|_{k_\epsilon}^2 ~\geqslant~ \dotp{\mu}{1}^2\,\eta
\end{align*}
and show that
\begin{align*}
\E_\epsilon(\al,\mu) ~\geqslant~& \dotp{\al-\mu}{1}~+~\tfrac{1}{2}\|\mu\|_{k_\epsilon}^2-\tfrac{1}{2}\\
~\geqslant~& \dotp{\mu}{1} ( \dotp{\mu}{1}\,\eta~-~1)-\tfrac{1}{2}.
\end{align*}
As we build a minimizing sequence $(\mu_n)$
for $\F_\epsilon(\al)$, we can thus assume that
$\dotp{\mu_n}{1}$ is uniformly bounded by some constant
$M>0$.

\paragraph{Weak continuity.}
Crucially, the Banach-Alaoglu theorem
asserts that
\begin{align*}
\{\, \mu\in\Mm^+(\Xx) ~|~ \dotp{\mu}{1}\leqslant M \,\}
\end{align*}
is weakly compact;
we can thus extract a weakly converging subsequence
$\mu_{n_k}\rightharpoonup \mu_\infty$ from
the minimizing sequence $(\mu_n)$.
Using Proposition~\ref{prop:KL_lsc}
and the fact that $k_\epsilon$ is continuous
on $\Xx\times\Xx$,
we show that $\mu\mapsto\E_\epsilon(\al,\mu)$ is a weakly
lower semi-continuous function:
$\mu_\infty=\mu_\al$ realizes the minimum
of $\E_\epsilon$ and we get our existence result.

\paragraph{Uniqueness.}
We assumed that our kernel $k_\epsilon$
is \emph{positive universal}.
The squared norm $\mu\mapsto\|\mu\|_{k_\epsilon}^2$
is thus a strictly convex functional
and using Proposition~\ref{prop:KL_cvx},
we can show that $\mu\mapsto \E_\epsilon(\al,\mu)$
is \emph{strictly} convex.
This ensures that $\mu_\al$ is uniquely defined.
\end{proof}

\subsection{Proof of Proposition~\ref{prop:concavity}}
\label{appendix:prop:concavity}


Let us take a pair of measures  $\al_0\neq\al_1$ in
$\Mm_1^+(\Xx)$,
and $t\in(0,1)$;
according to Proposition~\ref{prop:Feps_unicity},
there exists a pair of measures $\mu_0$, $\mu_1$ 
in $\Mm^+(\Xx)$ such that
\begin{align*}
(1-t)&\,\F_\epsilon(\al_0)
+ t\,\F_\epsilon(\al_1)\\
&=
\epsilon\,(1-t)\,\E_\epsilon(\al_0,\mu_0)
+\epsilon\,t\,\E_\epsilon(\al_1,\mu_1) \\
&>
\epsilon\,\E_\epsilon((1-t)\,\al_0 + t\,\al_1, (1-t)\,\mu_0 + t\,\mu_1) \\
&\geqslant
\F_\epsilon((1-t)\,\al_0 + t\,\al_1),
\end{align*}
which is enough to conclude.
To show the strict inequality, let us remark that
\begin{align*}
&(1-t)\,\E_\epsilon(\al_0,\mu_0)
+t\,\E_\epsilon(\al_1,\mu_1)\\
~=~&\E_\epsilon((1-t)\,\al_0 + t\,\al_1, (1-t)\,\mu_0 + t\,\mu_1)
\end{align*}
would imply that $\mu_0=\mu_1$,
since $\mu\mapsto\|\mu\|_{k_\epsilon}^2$
is strictly convex.
As $\al\mapsto\KL(\al,\be)$ is strictly
convex on the set of measures $\al$
that are absolutely continuous with respect to $\be$,
we would then have $\al_0=\al_1$ and
a contradiction with our first hypothesis.


\subsection{Proof of the Metrization of the Convergence in Law}
\label{appendix:metrize-conv-law}


The regularized OT cost is weakly continuous, and the uniform convergence
for dual potentials
ensures that $\H_\epsilon$ and
$\Wb_\epsilon$ are both continuous too.
Paired with~\eqref{eq:divergence_definite},
this property guarantees
the convergence towards $0$ of the Hausdorff and Sinkhorn divergences,
as soon as $\al_n \rightharpoonup\al$.

Conversely, let us assume that $\Wb_\epsilon(\al_n,\al)\rightarrow 0$ (resp. $\H_\epsilon(\al_n,\al)$).
Any weak limit $\al_{n_\infty}$
of a subsequence $(\al_{n_k})_k$ is equal to $\al$:
since our divergence is weakly continuous,
we have $\Wb_\epsilon(\al_{n_\infty},\al)=0$
(resp. $\H_\epsilon(\al_{n_\infty},\al)$),
and positive definiteness holds through~\eqref{eq:divergence_definite}.

In the meantime, since $\Xx$ is compact, 
the set of probability Radon measures $\Mm_1^+(\Xx)$ is 
sequentially compact for the weak-$\star$
topology.
$\al_n$ is thus a compact sequence with a unique adherence
value: it converges, towards $\al$.

\end{document}